\theoremstyle{plain}
\title{Strongly Dense Representations of Hyperbolic 3-Manifold Groups}
\author{Ricky Lee}
\date{}
\begin{document}
\maketitle

\newtheorem{thm}{Theorem}[section]
\newtheorem{lemma}[thm]{Lemma}
\newtheorem{prop}[thm]{Proposition}
\newtheorem{cor}[thm]{Corollary}
\newtheorem{defn}[thm]{Definition}
\newtheorem{examp}[thm]{Example}
\newtheorem{conj}[thm]{Conjecture}
\newtheorem{question}{Question}
\newtheorem*{rmk}{Remark}
\newtheorem{Claim}{Claim}
\begin{abstract}
	We provide the first examples of strongly dense representations of a hyperbolic 3-manifold group into $SL(4,\mathbb{R})$ and $SU(3,1)$ i.e. representations where every pair of non-commuting elements has Zariski dense image. Our examples are holonomy representations arising from projective deformations of its hyperbolic structure. As a Corollary, we get that $SL(4,\mathbb{R})$ has non-Hitchin strongly dense surface subgroups.
\end{abstract}

\section{Introduction}

Call a subgroup $\mathcal{H}$ of a semi-simple algebraic group $\mathcal{G}$ \emph{strongly dense} if all pairs of non-commuting elements in $\mathcal{H}$ generate a Zariski dense subgroup of $\mathcal{G}$. Strong density was introduced by Breuillard, Green, Guralnick, and Tao in \cite{Tao}, where they establish the existence of strongly dense free subgroups in many different algebraic groups. But given an algebraic group, it remains a challenging problem to determine which Zariski dense subgroups contain strongly dense free subgroups (see Problem 1 in \cite{Tao} and Question 7.3 in \cite{BIRS}).

Recently there have been advances in the understanding of strongly dense subgroups of $SL(n,\mathbb{R})$ coming from hyperbolic geometry. Say a representation is strongly dense if it has strongly dense image. In \cite{LR}, Long and Reid show that most Hitchin representations of the 344 triangle group into $SL(3,\mathbb{R})$ are strongly dense. Later in \cite{LRW}, Long, Reid, and Wolff extend this result to show that for any closed hyperbolic surface $\Sigma_g$, most Hitchin representations of $\pi_1(\Sigma_g)$ into $SL(n,\mathbb{R})$ are strongly dense. In \cite{BGL}, Breuillard, Guralnick, and Larsen use a similar strategy to establish the existence of a larger class of groups that have strongly dense representations into various algebraic groups.

Given these recent developments, it is natural to wonder which groups can actually occur as strongly dense subgroups of a given semisimple algebraic group. In this paper we show that there are strongly dense representations of closed hyperbolic 3-manifold groups into $SL(4,\mathbb{R})$ and $SU(3,1)$. To the author's knowledge, these are the first examples of strongly dense representations of hyperbolic 3-manifold groups. 

In order to discuss our results more carefully, we review some background. Recall that for $n\geq 3$, if a closed $n$-manifold admits a complete hyperbolic structure then this structure is unique by Mostow rigidity. However, one can regard hyperbolic structure as an instance of real projective structure, and in the projective setting there is no notion of Mostow rigidity. For some hyperbolic 3-manifolds $M$, the holonomy representation $\pi_1(M)\rightarrow SO^+(3,1)$ from the complete hyperbolic structure can be continuously deformed into representations $\pi_1(M)\rightarrow SL(4,\mathbb{R})$ that are holonomies of convex projective structures on $M$. See for instance \cite{Ballas,Computing,Flexing}.

In this paper we focus on the closed hyperbolic manifold known as Vol3, whose deformation theory has been extensively studied in \cite{Computing,Flexing,Thin3}. Vol3 is a closed hyperbolic 3-manifold. It has been shown via arithmetic methods that $\pi_1(Vol3)$ contains no non-elementary Fuchsian subgroups (see Theorem 9.5.5 of \cite{Arithmetic}). Its fundamental group has presentation 
\[	\langle a,b | aabbABAbb, aBaBabaaab \rangle,	\] 
where $A=a^{-1}$ and $B=b^{-1}$. 

A one complex parameter family of representations of Vol3 is given in section 2.4 of \cite{Flexing}. In order to get a family of representations with simpler expressions for the matrix entries, we instead take an orbifold that is four-fold covered by Vol3. The orbifold is obtained from an order 4 rotational symmetry of Vol3 that fixes the axis of $aBaba$ and induces an automorphism of $\pi_1(Vol3)$ such that $a\mapsto BA$ and $b\mapsto aba$. See remark 7.2 in \cite{Computing} for more details. Let $\Gamma_3$ denote the resulting orbifold fundamental group. Then $\Gamma_3$ is generated by two elements $u$ and $c$, where $u^4=1$ and $c=u^2a$. Note that $\pi_1(Vol3)$ can be recovered from $a=u^2c$ and $b=(aua)^{-1}u$. 

We take the one complex parameter family of representations of $\Gamma_3$ found in \cite{Thin3}. The representation is given by 
\[
\rho_{v}(u)=
\begin{bmatrix}
1 & 0 & 0 & 0 \\
0 & 1 & 0 & 0 \\
0 & 0 & \sqrt{\frac{v^2-4}{v^2+8}} & 1\\
0 & 0 & -\frac{2(v^2+2)}{v^2+8} & -\sqrt{\frac{v^2-4}{v^2+8}} 
\end{bmatrix}
\]
and 
\[
\rho_{v}(u)=
\begin{bmatrix}
(v+\sqrt{v^2+8})/4 & 0 & (4-v^2-v\sqrt{v^2+8})/8 & 0 \\
0 & (v-\sqrt{v^2+8})/4 & 0 & (-4+v^2-v\sqrt{v^2+8})/8 \\
1 & 0 & (-v-\sqrt{v^2+8})/4 & 0\\
0 & -1 & 0 & (-v+\sqrt{v^2+8})/4 
\end{bmatrix}.
\]

At the specialization $v=2$, $\rho_2:\Gamma_3\rightarrow SO^+(3,1)$ is the discrete faithful representation coming from the hyperbolic structure. As shown in \cite{Thin3}, for all $v\in (2,\infty)$, deep work of Koszul \cite{Kozul} and Benoist \cite{Benoist1, Benoist2} can be used to show that the representations $\rho_v$ are holonomies of strictly convex projective structures. All such $\rho_v$ are discrete, faithful, and \emph{biproximal}. That is, for every infinite order $\gamma\in \Gamma_3$, $\rho_v(\gamma)$ has a unique eigenvalue of strictly largest modulus, and a unique eigenvalue of strictly lowest modulus. For $v\in (-2,2)$, $\rho_v:\Gamma_3 \rightarrow SU(3,1)$  (see section 2.4 of \cite{Flexing}). Theorem 2.3 of \cite{Flexing} implies there exists $\epsilon>0$ such that for all $v\in (2-\epsilon,2)$, $\rho_v$ is discrete and faithful. With this notation, we can state our first main result that ``most" projective deformations of the hyperbolic structure on Vol3 give rise to holonomy representations that are strongly dense.

\begin{thm}\label{main}
	The representation $\rho_v:\Gamma_3\rightarrow SL(4,\mathbb{R})$ is strongly dense for all but a possibly countable subset of $v\in (2,\infty)$.
\end{thm}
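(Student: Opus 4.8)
The plan is to reduce strong density to a countable family of real-analytic conditions on the parameter $v$, and then to exclude the degenerate alternative in each case. Strong density of $\rho_v$ fails exactly when some pair $(\gamma_1,\gamma_2)\in\Gamma_3\times\Gamma_3$ with $[\gamma_1,\gamma_2]\neq 1$ has $\rho_v(\langle\gamma_1,\gamma_2\rangle)$ not Zariski dense, i.e.\ contained in a conjugate of some maximal proper closed subgroup $\mathcal S<SL(4,\mathbb R)$. Since $\Gamma_3\times\Gamma_3$ is countable and a countable union of countable sets is countable, it suffices to fix one such pair and one conjugacy class $\mathcal S$ and to show that the bad set $B_{\mathcal S,\gamma_1,\gamma_2}=\{v\in(2,\infty):\rho_v(\langle\gamma_1,\gamma_2\rangle)\text{ lies in a conjugate of }\mathcal S\}$ is countable.

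First I would classify the subgroups $\mathcal S$ that can actually arise. Because every infinite-order $\rho_v(\gamma)$ is biproximal and the representation is discrete and faithful, the relevant possibilities are: the reducible subgroups (stabilizers of a proper subspace of $\mathbb R^4$); the stabilizers of a nondegenerate bilinear form (conjugates of $O(p,q)$ or of $Sp(4,\mathbb R)$); the irreducibly embedded $PSL(2,\mathbb R)$ via $\mathrm{Sym}^3$ together with the tensor- and scalar-extension copies built from $SL(2,\mathbb R)$ and $SL(2,\mathbb C)$; and the virtually abelian (elementary) closures. In each case, ``$\langle g,h\rangle$ lies in a conjugate of $\mathcal S$'' is expressed by the existence of a common invariant vector or tensor in an auxiliary representation $W$, hence by the vanishing of certain minors of $\rho_v(\gamma_1)$ and $\rho_v(\gamma_2)$ acting on $W$. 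Since the matrix entries of $\rho_v$ are real-analytic on the connected interval $(2,\infty)$ (they are built from $\sqrt{v^2-4}$ and $\sqrt{v^2+8}$), each such condition is real-analytic in $v$, so $B_{\mathcal S,\gamma_1,\gamma_2}$ is either all of $(2,\infty)$ or a discrete, hence countable, set.

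It therefore remains to exclude $B_{\mathcal S,\gamma_1,\gamma_2}=(2,\infty)$ for each type, and here I would use the two geometric inputs recorded above. The $\mathrm{Sym}^3$, $SO(2,1)\subset SO(3,1)$, and product/tensor obstructions all force $\langle\gamma_1,\gamma_2\rangle$ to be virtually a non-elementary Fuchsian group; since $\rho_v$ is faithful and $\pi_1(Vol3)$ contains no non-elementary Fuchsian subgroups (Theorem 9.5.5 of \cite{Arithmetic}, which passes to the finite extension $\Gamma_3$), these are impossible. For the remaining orthogonal and symplectic obstructions I would specialize at $v=2$: a non-commuting, non-elementary pair lands in $SO^+(3,1)$, and—being non-elementary and not Fuchsian—acts absolutely irreducibly on $\mathbb R^4$, so by Schur its only invariant bilinear form is the ambient symmetric one. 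By upper semicontinuity of the dimension of the space of commonly preserved tensors, this rules out, on a neighborhood of $v=2$ and hence off a countable set, persistence of reducibility, of an invariant symplectic form, and of any invariant tensor not already carried by $SO(3,1)$. Elementary pairs generate virtually cyclic groups, whose Zariski closures are never all of $SL(4,\mathbb R)$, so such pairs must in fact commute; this is where the torsion in $\Gamma_3$ (from $u^4=1$) needs care, and I would dispose of it by checking that non-commuting elementary pairs do not occur, for instance by passing to $\pi_1(Vol3)$.

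The main obstacle is the single obstruction that shares the symmetric-form type of the initial structure: ruling out that a non-commuting pair remains conjugate into $O(3,1)$ for \emph{every} $v$, with a deforming form $Q_v$ specializing to the hyperbolic form at $v=2$. When $\langle\gamma_1,\gamma_2\rangle$ has finite index this is immediate, since an invariant form for a (normal) finite-index subgroup of an irreducible representation propagates to all of $\Gamma_3$, contradicting the fact that the full holonomy leaves $SO(3,1)$ for $v>2$ (it is the holonomy of a convex projective, non-hyperbolic structure, hence Zariski dense by \cite{Benoist1}). The delicate case is an infinite-index, non-elementary, finitely generated subgroup, whose restricted deformation could a priori stay inside $SO(3,1)$; I expect this to be the crux, and I would resolve it by an infinitesimal computation at $v=2$ showing that the projective deformation of Vol3 restricts non-trivially to every such subgroup, so that no invariant symmetric form can persist and $B_{O(3,1),\gamma_1,\gamma_2}$ is again countable.
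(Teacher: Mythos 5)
Your overall skeleton is sound and runs parallel to the paper's: reduce to countably many pairs, use the fact that the matrix entries are algebraic/analytic in $v$ so that each ``bad'' locus is either discrete or all of $(2,\infty)$, and kill reducibility and invariant skew forms by specializing at $v=2$ (where absolute irreducibility holds) plus semicontinuity; this is essentially the paper's combination of Burnside's theorem, Benoist's semisimplicity theorem, and the subalgebra classification. You have also correctly isolated the one obstruction this strategy cannot touch: an invariant \emph{symmetric} form, i.e.\ the possibility that $\rho_v(\langle\gamma_1,\gamma_2\rangle)$ stays conjugate into $O(3,1)$ for every $v$, which no argument at $v=2$ can exclude since $\rho_2(G)$ genuinely preserves such a form. (One side flaw before the main point: disposing of the $\mathrm{Sym}^3$ and tensor cases by ``no non-elementary Fuchsian subgroups'' misreads Theorem 9.5.5 of \cite{Arithmetic}. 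That theorem concerns subgroups that are Fuchsian with respect to the hyperbolic structure, i.e.\ conjugate into $PSL(2,\mathbb{R})$ inside $PSL(2,\mathbb{C})$; a rank 2 free subgroup is abstractly a discrete subgroup of $SL(2,\mathbb{R})$, so containment of $\rho_v(G)$ in a conjugate of $\mathrm{Sym}^3SL(2,\mathbb{R})$ at some $v>2$ is not by itself a contradiction. This case survives only because $\mathrm{Sym}^3SL(2,\mathbb{R})$ preserves a symplectic form and is thus subsumed by your symplectic case.)

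The genuine gap is your proposed resolution of the symmetric-form case. The claim that it suffices to show ``the projective deformation of Vol3 restricts non-trivially to every such subgroup'' is both unproved and insufficient in principle: non-triviality of the restricted deformation does not imply that the restricted path leaves the locus of form-preserving representations. A rank 2 free subgroup of $SO^+(3,1)$ has a positive-dimensional deformation space \emph{inside} $SO^+(3,1)$ (already $H^1(F_2,\mathfrak{so}(3,1))\neq 0$), so $v\mapsto\rho_v|_G$ could a priori be a non-trivial path that remains conjugate into $O(3,1)$ for all $v$; furthermore a first-order computation at $v=2$ cannot decide this (tangency to the locus neither implies nor precludes containment), and you would need it uniformly over the infinitely many infinite-index free subgroups $G$. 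This is exactly where the paper's main technical work lies, and it uses a different mechanism: algebraicity of the whole family. If a symmetric form $J_v$ persisted on an interval containing $2$, then by irreducibility $J_v$ is unique up to scale, its entries are algebraic functions of $v$, and the invariance identity propagates to all complex $v$; at the complex specialization $v=i\sqrt{2}$ the representation degenerates to a block form built from the $SL(2,\mathbb{C})$-holonomy of Vol3, and Zariski density of that holonomy (this is where no-Fuchsian-subgroups is correctly used) together with a Galois/divisibility argument forces every entry of $J_v$ to carry the factor $v^2+2$, contradicting the normalization (Lemmas \ref{split} and \ref{nosym}). This is then upgraded, via non-palindromic characteristic polynomials and the Kim--Kim theorem on complex hyperbolic Kleinian groups applied on $(2-\epsilon,2)$, to a statement valid for all but finitely many $v\in(2,\infty)$ and stable under passing to powers (by biproximality), which also settles the identity-component issue for the Zariski closure (Proposition \ref{nopal} and Claim \ref{powers}). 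Without an ingredient of this kind, your set $B_{O(3,1),\gamma_1,\gamma_2}$ could be all of $(2,\infty)$, and the proof does not close.
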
 

An immediate byproduct of our efforts is that many complex hyperbolic deformations of Vol3 also give strongly dense representations:

\begin{thm}\label{main2}
	For all $\epsilon>0$, there exists $t\in (2-\epsilon,2)$, with $\rho_t:\Gamma_3 \rightarrow SU(3,1)$ discrete and faithful, such that $\rho_t(\Gamma_3)$ is strongly dense in $SU(3,1)$.
\end{thm}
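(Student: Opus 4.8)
The plan is to leverage Theorem~\ref{main} together with a continuity/openness argument to pass from the real projective side $v\in(2,\infty)$ into the complex hyperbolic regime $v\in(2-\epsilon,2)$. The key conceptual point is that strong density is, roughly speaking, a ``generic'' condition: the failure of a pair $(\rho_v(\gamma),\rho_v(\delta))$ of non-commuting elements to generate a Zariski dense subgroup is cut out by polynomial (algebraic) conditions on the matrix entries, which vary analytically in the complex parameter $v$. Since Theorem~\ref{main} tells us the strongly dense locus is cobountable (complement at most countable) in $(2,\infty)$, I would argue that the obstructions to strong density cannot persist on a whole one-sided interval accumulating at $v=2$, and hence strong density must also hold at a sequence of parameters approaching $2$ from below.

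First I would set up the relevant parametrized family carefully. The matrix entries of $\rho_v(u)$ and $\rho_v(c)$ are algebraic functions of $v$ (involving $\sqrt{v^2+8}$ and $\sqrt{v^2-4}$); on the interval $(-2,2)$ the quantity $v^2-4$ is negative, which is precisely what pushes the representation from $SO^+(3,1)$ into $SU(3,1)$. I would regard $v$ as ranging over a small complex neighborhood of $2$, so that for each fixed word $\gamma\in\Gamma_3$ the matrix $\rho_v(\gamma)$ depends holomorphically (or at least real-analytically, after choosing branches of the radicals) on $v$. The next step is to recall the algebraic characterization of when a subgroup generated by two elements fails to be Zariski dense: the Zariski closure is a proper algebraic subgroup, and for the relevant target ($SL(4,\mathbb{C})$, whose real forms are $SL(4,\mathbb{R})$ and $SU(3,1)$) there is a finite list of conjugacy classes of maximal proper connected algebraic subgroups. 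Failure of strong density for a fixed pair $(\gamma,\delta)$ therefore corresponds to the vanishing of a specific system of polynomial relations among the entries of $\rho_v(\gamma)$ and $\rho_v(\delta)$, giving an analytic function $f_{\gamma,\delta}(v)$ whose zero set is exactly the bad locus for that pair.

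With this in hand, the main argument is an identity-theorem (analytic continuation) step. For each pair of non-commuting words, the bad-parameter function $f_{\gamma,\delta}$ is real-analytic on an interval and extends holomorphically near $v=2$. Theorem~\ref{main} guarantees that on $(2,\infty)$ the total bad locus $\bigcup_{\gamma,\delta} Z(f_{\gamma,\delta})$ is at most countable, so no single $f_{\gamma,\delta}$ vanishes identically on any subinterval of $(2,\infty)$; by the identity theorem each $f_{\gamma,\delta}$ is not identically zero on the complex neighborhood, hence its zero set in $(2-\epsilon,2)$ is discrete. Taking a countable union over all pairs $(\gamma,\delta)$ of words in the (countable) group $\Gamma_3$, the bad locus in $(2-\epsilon,2)$ is a countable union of discrete (hence countable) sets, so its complement is nonempty and in fact cobountable. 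Intersecting with the interval $(2-\epsilon',2)$ on which Theorem~2.3 of \cite{Flexing} furnishes discreteness and faithfulness yields the desired parameter $t$.

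The hard part, and the place where the argument needs genuine care rather than soft analysis, is justifying that the ``bad locus'' really is algebraic/analytic in a uniform enough way, and in particular controlling the branch choices for $\sqrt{v^2-4}$ as $v$ crosses the value $2$ where this expression changes sign. One must check that passing from the $SL(4,\mathbb{R})$ form to the $SU(3,1)$ form does not alter the \emph{complex} Zariski closure conditions: strong density in a real form $G$ of $SL(4,\mathbb{C})$ should be equivalent to Zariski density of each non-commuting pair inside $SL(4,\mathbb{C})$, provided one knows the image lands in $G$ and $G$ is Zariski dense in $SL(4,\mathbb{C})$ (true for both $SL(4,\mathbb{R})$ and $SU(3,1)$). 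I would therefore phrase the whole obstruction in terms of the complexified group, where the classification of proper subgroups is uniform, so that the analytic functions $f_{\gamma,\delta}$ are genuinely the same functions on both sides of $v=2$. The remaining subtlety is ensuring that the countable exceptional set from Theorem~\ref{main} does not conspire, via analytic continuation, to force bad behavior precisely on $(2-\epsilon,2)$; this is exactly ruled out by the identity theorem once one knows each $f_{\gamma,\delta}$ is nonzero somewhere, which Theorem~\ref{main} supplies.
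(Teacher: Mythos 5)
Your core strategy---deduce the $SU(3,1)$ statement from Theorem~\ref{main} by analytic continuation in the parameter $v$---is genuinely different from the paper's, but as written it has two real gaps, and they sit precisely at the two places you yourself label as needing ``genuine care.'' First, the claim that for a fixed non-commuting pair $(\gamma,\delta)$ the failure of Zariski density is the zero set of a single analytic function $f_{\gamma,\delta}(v)$ is not justified, and is not true as stated. Failure of Zariski density means that $\langle\rho_v(\gamma),\rho_v(\delta)\rangle$ lies in \emph{some conjugate} of \emph{some} maximal proper closed subgroup of $SL(4,\mathbb{C})$; the conjugator is existentially quantified, so each such condition only defines a constructible set (the image of a conjugation map, via Chevalley's theorem), not a Zariski-closed one --- for reductive maximal subgroups such as the normalizers of $SO(4,\mathbb{C})$ or of a tensor decomposition, these loci are genuinely not closed. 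Moreover, ``maximal proper \emph{connected} algebraic subgroups'' is the wrong list: maximal closed subgroups can be disconnected or even finite. Your argument can likely be repaired --- pull back each locally closed piece under the analytic curve, use that an analytic subset of a connected one-dimensional domain is discrete or everything, and invoke Theorem~\ref{main} to rule out the ``co-discrete'' case --- but this case analysis, together with the finiteness of the set of conjugacy classes of maximal closed subgroups, is exactly the missing content; it is asserted, not proved.

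Second, the analytic continuation step fails as written: the matrix entries, and hence the coefficients of all characteristic polynomials (cf.\ Proposition~\ref{nopal}), involve $\sqrt{v^2-4}$, which has a branch point \emph{exactly at} $v=2$, the point across which you want to continue. So ``each $f_{\gamma,\delta}$ extends holomorphically near $v=2$'' is false; the identity theorem cannot be applied on any neighborhood of $2$ until you pass to the double cover, e.g.\ substitute $v=2\cosh s$ (real $s>0$ gives $v\in(2,\infty)$, imaginary $s$ gives $v\in(-2,2)$), where all entries become analytic in $s$. You flag this but do not resolve it, and it is not cosmetic: the branch point at $v=2$ is precisely what separates the $SL(4,\mathbb{R})$ and $SU(3,1)$ regimes. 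For contrast, the paper never continues across $v=2$ at all: it argues directly in $SU(3,1)$, using the Chen--Greenberg classification (Theorem 4.4.2 of \cite{closedcplx}) of closed subgroups --- the Zariski closure of $\rho_t(G)$ is either all of $SU(3,1)$, or fixes a point of $\partial\mathbb{CH}^3$ (ruled out by absolute irreducibility, Proposition~\ref{ASirreducible}), or preserves a totally geodesic submanifold, forcing conjugation into $SO(3,1)$ (ruled out because some element has non-palindromic characteristic polynomial, Proposition~\ref{nopal}). That route needs no continuation argument, no Chevalley constructibility, and no classification of maximal subgroups of $SL(4,\mathbb{C})$, which is why the paper's proof is short where yours would be long.
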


All currently known strongly dense representations of surface groups into $SL(n,\mathbb{R})$ have been \emph{Hitchin}. As shown in section 3 of \cite{ConstructingThin}, $\pi_1(Vol3)$ contains closed hyperbolic surface subgroups $\pi_1(F)<\pi_1(Vol3)$ such that for all $v\in (2,\infty)$, $\rho_v|_{\pi_1(F)}$ is not Hitchin. Since strongly dense representations remain strongly dense upon restriction to non-abelian subgroups, we have the following Corollary:

\begin{cor}
	$SL(4,\mathbb{R})$ contains non-Hitchin strongly dense surface subgroups.
\end{cor}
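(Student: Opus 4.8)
The plan is to deduce the Corollary directly from Theorem~\ref{main} together with the non-Hitchin surface subgroups constructed in Section~3 of \cite{ConstructingThin}, invoking the hereditary property of strong density already noted in the text. The first point to pin down is the chain of subgroups: since $Vol3$ four-fold covers the orbifold, we have $\pi_1(F)<\pi_1(Vol3)<\Gamma_3$, so for every parameter $v$ the representation $\rho_v$ restricts to $\pi_1(F)$. Because $\rho_v$ is discrete and faithful for each $v\in(2,\infty)$ and $\pi_1(F)$ is the fundamental group of a closed hyperbolic surface, the image $\rho_v(\pi_1(F))$ is isomorphic to $\pi_1(F)$ and is in particular non-abelian; thus it is a genuine surface subgroup of $SL(4,\mathbb{R})$.

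Next I would record why strong density is inherited by non-abelian subgroups. If $\rho_v(\Gamma_3)$ is strongly dense, then any two elements $x,y\in\rho_v(\pi_1(F))$ that fail to commute form, in particular, a non-commuting pair inside the larger group $\rho_v(\Gamma_3)$, and hence generate a Zariski dense subgroup of $SL(4,\mathbb{R})$ by the definition of strong density. Since every non-commuting pair in $\rho_v(\pi_1(F))$ arises this way, $\rho_v(\pi_1(F))$ is itself strongly dense, and the non-abelianness established above ensures this conclusion is not vacuous.

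It then remains only to choose the parameter compatibly. By Theorem~\ref{main} the set of $v\in(2,\infty)$ for which $\rho_v$ fails to be strongly dense is at most countable, whereas $\rho_v|_{\pi_1(F)}$ is non-Hitchin for \emph{every} $v\in(2,\infty)$ by \cite{ConstructingThin}. As $(2,\infty)$ is uncountable, I may fix a single $v_0$ avoiding the countable exceptional set; then $\rho_{v_0}(\pi_1(F))$ is simultaneously strongly dense and non-Hitchin, which is precisely the desired surface subgroup. I do not expect a substantive obstacle here, since all the analytic weight lies in the cited inputs; the only care required is the bookkeeping that $\pi_1(F)$ genuinely embeds in $\Gamma_3$ (so that $\rho_v$ restricts to it) and the verification that one $v_0$ can meet both conditions at once, which is immediate from the fact that the non-Hitchin condition holds on the whole interval while strong density fails only on a countable set.
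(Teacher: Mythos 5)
Your proposal is correct and is essentially the paper's own argument: the paper deduces the Corollary from Theorem \ref{main}, the non-Hitchin surface subgroups $\pi_1(F)<\pi_1(\text{Vol3})$ of \cite{ConstructingThin}, and the observation that strong density passes to non-abelian subgroups, exactly as you do. Your write-up merely makes explicit the bookkeeping (the inclusion $\pi_1(F)<\pi_1(\text{Vol3})<\Gamma_3$, non-abelianness of the image, and the choice of a parameter $v_0$ avoiding the countable exceptional set) that the paper leaves implicit.
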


As previously mentioned, in \cite{BGL} the authors established the existence of a large class of groups with strongly dense embeddings into various algebraic groups. This class contains closed hyperbolic surface groups. Their results work over algebraically closed fields, but they remark that sometimes, given other technical conditions, this assumption can be relaxed. It would be interesting to see if the ideas in \cite{BGL} can be extended to our setting, or apply to other 3-manifold groups.

Our methods follow a very different strategy than that in \cite{BGL,LRW}. The arguments are more in the spirit of \cite{LR}, and originate from the curious observation that the tautological representations obtained from flexing often have specializations giving lower dimensional representations with geometric significance.

The paper is organized as follows. In section \ref{sird} we establish irreducibility results on the representation when restricted to an arbitrary non-abelian free subgroup of $\Gamma_3$. Then in section \ref{scplx}, we give some necessary background in complex hyperbolic geometry. Finally in section \ref{smain}, we prove the main theorems.

\newtheorem*{Acknowledgments}{Acknowledgments}

\begin{Acknowledgments}
	The author would like to thank Darren Long for introducing him to this field, and for many helpful discussions.
\end{Acknowledgments}

\section{Irreducibility}\label{sird}
A subgroup of $GL(4,\mathbb{R})$ is \emph{irreducible} if its action on $\mathbb{R}^4$ has no invariant subspaces. Say a subgroup of $GL(4,\mathbb{C})$ is \emph{absolutely irreducible} if its action on $\mathbb{C}^4$ has no invariant subspaces. A subgroup of $GL(4,\mathbb{R})$ is \emph{strongly irreducible} if all of its finite index subgroups are irreducible. Our first task toward the proof of our main Theorems will be to show $\rho_v$ is absolutely irreducible for all but finitely many specializations of $v$, and strongly irreducible for all but countably many specializations of $v$, when restricted to a non-abelian free subgroup of $\Gamma_3$.

Since it will be important for future arguments, we start this section with some specifics about the Minkowski model of $\mathbb{H}^3$. Recall that the isometries of the upper half space model of $\mathbb{H}^3$ is naturally identified with $PSL(2,\mathbb{C})$, while the isometries of the Minkowski model of $\mathbb{H}^3$ is given by $SO^+(3,1)$. We begin by reviewing a well known isomorphism between the two.

Write 
\begin{equation}\label{herm}
\mathcal{H}:=\left\{	\begin{bmatrix}
1 & 0  \\
0 & 1 
\end{bmatrix}, \begin{bmatrix}
1 & 0  \\
0 & -1 
\end{bmatrix},\begin{bmatrix}
0 & 1  \\
1 & 0 
\end{bmatrix},\begin{bmatrix}
0 & i  \\
-i & 0 
\end{bmatrix}\right\}.
\end{equation} 

Then span$_\mathbb{R}\mathcal{H}$ is the real vector space of $2\times 2$ Hermitian matrices. An explicit isomorphism from $\mathbb{R}^4$ to span$_\mathbb{R}\mathcal{H}$ is given by 
\[	(t,x,y,z)\mapsto \begin{bmatrix}
t+z & x+iy  \\
x-iy & t-z 
\end{bmatrix}.	\]
The function $H\mapsto -\det(H)$ gives a quadratic form on span$_\mathbb{R}\mathcal{H}$ that is preserved by the following action of $PSL(2,\mathbb{C})$: given $[A]\in PSL(2,\mathbb{C})$, let 
\[	[A].H=A^*HA	\]
where $A\in SL(2,\mathbb{C})$ is any representative of $[A]\in PSL(2,\mathbb{C})$ and $A^*$ is the Hermitian transpose of $A$. For any $A\in SL(2,\mathbb{C})$, let 
\[\tau(A):\text{span}_\mathbb{R}\mathcal{H}\rightarrow\text{span}_\mathbb{R}\mathcal{H}\]
denote the $\mathbb{R}$-linear map defined by $\tau(A)(H):= A^*HA$. Then an explicit isomorphism from $PSL(2,\mathbb{C})$ to $SO^+(3,1)$ is given by mapping each $[A]\in PSL(2,\mathbb{C})$ to the matrix representative of $\tau(A)$ with respect to $\mathcal{H}$, for any representative $A$ of $[A]$.

From this isomorphism one can deduce the following facts about hyperbolic isometries in $SO^+(3,1)$. One can consult section 2 of \cite{Computing} for more detail. A hyperbolic $g\in SO^+(3,1)$ has eigenvalues $\lambda$, $\frac{1}{\lambda}$, $e^{i\theta}$, and $e^{-i\theta}$ with $\lambda \in \mathbb{R}$ and $|\lambda|> 1$. We refer to the $\mathbb{R}$-span of any $\lambda$ and $\frac{1}{\lambda}$ eigenvector of $g$ as the attracting and repelling eigenspaces, respectively, of $g$. These are the 1-dimensional subspaces of $\mathbb{R}^4$ that contain the ray in the light cone corresponding to the attracting and repelling fixed point at infinity of $g$, respectively. We refer to the $\mathbb{C}$-span of any $e^{\pm i\theta}$ eigenvector of $g$ as a rotation eigenspace of $g$. If a hyperbolic in $SO^+(3,1)$ has a 1-eigenvalue, then it must mean $\theta=0$ and the hyperbolic is a pure translation, coming from a hyperbolic in $PSL(2,\mathbb{C})$ with real trace. 

\begin{lemma}\label{Invariant}
	Let $G=\langle \gamma_1, \gamma_2 \rangle$ be a free subgroup of $\Gamma_3$. Put $g_j=\rho_2(\gamma_j)$. If $W\subseteq \mathbb{R}^4$ is a $\rho_2(G)$-invariant subspace generated by light like vectors, then $W=\mathbb{R}^4$.
\end{lemma}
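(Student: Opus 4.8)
The plan is to translate the statement into the action of $\rho_2(G)$ on the boundary sphere $\partial\mathbb{H}^3$ via the isomorphism $SO^+(3,1)\cong PSL(2,\mathbb{C})$ recalled above, under which null rays correspond to points of $\partial\mathbb{H}^3\cong\mathbb{CP}^1$. Since $G$ is free non-abelian and $\rho_2$ is discrete and faithful, $\rho_2(G)\cong G$ is a discrete non-elementary subgroup of $SO^+(3,1)$; in particular it fixes no point of $\partial\mathbb{H}^3$ and has no finite orbit there. Because every element of $SO^+(3,1)$ preserves the form $-\det$, a $\rho_2(G)$-invariant subspace $W$ has its set of null rays preserved, and projectivizing gives a $\rho_2(G)$-invariant subset $\bar L=\mathbb{P}(W)\cap\partial\mathbb{H}^3$. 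I would assume toward a contradiction that $W$ is a proper, nonzero, invariant subspace generated by light-like vectors, classify $\bar L$ by $\dim W$, and eliminate each case.

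First I would record the relevant linear algebra for the signature-$(3,1)$ form. A definite subspace contains no nonzero null vector, and a proper degenerate subspace is forced to have a one-dimensional null radical with positive-definite complement, so all of its null vectors lie on a single line; in neither situation can null vectors span $W$. Hence a proper subspace generated by light-like vectors is either a null line ($\dim W=1$), a nondegenerate plane of signature $(1,1)$ ($\dim W=2$), or a nondegenerate three-space of signature $(2,1)$ ($\dim W=3$). Intersecting with the light cone and projectivizing, $\bar L$ is correspondingly a single point, a pair of points, or a round circle, the last being the ideal boundary of the totally geodesic copy of $\mathbb{H}^2$ cut out by $W$.

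It then remains to rule out each case geometrically. If $\bar L$ is one or two points, then $\rho_2(G)$ has a finite orbit on $\partial\mathbb{H}^3$, so a finite-index subgroup fixes one or two boundary points and is therefore elementary; this forces $\rho_2(G)$ to be elementary, contradicting that $G$ is free non-abelian. If $\bar L$ is a round circle, then $\rho_2(G)$ preserves the corresponding totally geodesic plane $\mathbb{H}^2\subset\mathbb{H}^3$ and is thus Fuchsian. To derive a contradiction I would pass to $G_0:=G\cap\pi_1(Vol3)$, which has finite index in $G$ because $\pi_1(Vol3)$ has finite index in $\Gamma_3$; as a finite-index subgroup of a non-abelian free group, $G_0$ is again free of rank at least two, so $\rho_2(G_0)$ is a non-elementary Fuchsian subgroup contained in $\rho_2(\pi_1(Vol3))$. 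This contradicts Theorem 9.5.5 of \cite{Arithmetic}, according to which $\pi_1(Vol3)$ contains no non-elementary Fuchsian subgroups. Having excluded every proper dimension, I conclude $W=\mathbb{R}^4$.

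The main obstacle is the three-dimensional case. The point and two-point cases are soft: a non-elementary group simply has no finite orbit on the boundary. An invariant circle, however, is entirely consistent with non-elementarity—Fuchsian groups are non-elementary—so there is no internal, purely group-theoretic obstruction, and the decisive input must come from outside. I would therefore lean on the arithmetic non-existence of non-elementary Fuchsian subgroups of $\pi_1(Vol3)$, taking care to verify that the descent to the finite-index subgroup $G_0$ preserves non-elementarity, which holds precisely because finite-index subgroups of non-abelian free groups remain non-abelian free.
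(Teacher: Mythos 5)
Your proof is correct, and it follows the same overall skeleton as the paper's: a case analysis on $\dim W$, with the low-dimensional cases eliminated by non-elementarity of $\rho_2(G)$ and the three-dimensional case reduced to the arithmetic fact that $\pi_1(Vol3)$ contains no non-elementary Fuchsian subgroups. The execution of the decisive case $\dim W = 3$ is genuinely different, however. The paper dualizes: an invariant $3$-space gives $\langle g_1^T,g_2^T\rangle$ an invariant line, hence a common eigenvector; the associated eigenvalue character is trivial on commutators, so every hyperbolic in $[\rho_2(G),\rho_2(G)]$ has a $1$-eigenvalue, hence is a pure translation corresponding to an element of $PSL(2,\mathbb{C})$ with real trace, and a non-elementary discrete group with real traces is Fuchsian. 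You instead classify the restriction of the Lorentzian form to a proper subspace spanned by null vectors — forced to be nondegenerate of signature $(2,1)$ when $3$-dimensional — so that $W$ directly cuts out an invariant totally geodesic $\mathbb{H}^2$, i.e.\ an invariant circle at infinity. Your route is more geometric and self-contained (no transpose trick and no trace characterization of Fuchsian groups needed), and it makes explicit two points the paper glosses: that a $2$-dimensional $W$ spanned by null vectors must have signature $(1,1)$, and that the no-Fuchsian-subgroups statement of Theorem 9.5.5 of \cite{Arithmetic}, stated for $\pi_1(Vol3)$, applies to subgroups of $\Gamma_3$ via the descent $G_0 = G\cap \pi_1(Vol3)$, which stays non-abelian free by Nielsen--Schreier. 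One small point worth adding to your circle case: elements of $\rho_2(G_0)$ preserving the circle may swap the two complementary disks, so to invoke the strict notion of Fuchsian group you should pass to the index-at-most-two subgroup preserving each side, which is still non-abelian free and hence still non-elementary; this costs nothing and closes the argument.
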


\begin{proof}
	Let $\{w_j\}$ be a basis for $W$ where each $w_j$ is a light like vector. If dim$_\mathbb{R}W=1$, then $w_1$ corresponds to a point at infinity fixed by the action of $\rho_2(G)$ on hyperbolic space. But no such point can exist because $g_1$ and $g_2$ are non-commuting hyperbolics and $\langle g_1,g_2\rangle$ is discrete.\par 
	
	If dim$_\mathbb{R}W=2$, then $W=$ span$_\mathbb{R}\{w_1,w_2\}$ is the unique geodesic with endpoints corresponding to $w_1$ and $w_2$. The $\rho_2(G)$-invariance of $W$ means this geodesic is fixed by the action of $H$ on hyperbolic space. But no such geodesic can exist because $g_1$ and $g_2$ are non-commuting hyperbolics and $\langle g_1,g_2\rangle$ is discrete.\par 
	
	If dim$_\mathbb{R}W=3$, then $\langle g_1^T, g_2^T \rangle$ has an invariant 1 dimensional subspace. That is, $g_1^T$ and $g_2^T$ have a common eigenvector. This means all the elements in the commutator subgroup $[\langle g_1^T, g_2^T \rangle,\langle g_1^T, g_2^T \rangle]$ have a 1 eigenvalue. Hence, all hyperbolics in $[\rho_2(G),\rho_2(G)]$ have a 1 eigenvalue. But hyperbolics in $SO^+(3,1)$ with a 1 eigenvalue are pure translations and correspond to hyperbolics in $PSL(2,\mathbb{C})$ with real trace. So this would imply $[\rho_2(G),\rho_2(G)]$ contains a non-elementary Fuchsian subgroup of $\Gamma_3$. But $\Gamma_3$ does not contain any non-elementary Fuchsian subgroups.\par 
	
	We conclude that dim$_\mathbb{R}W=4$ as desired.	
\end{proof}

\begin{lemma}\label{Light Basis}
	Let $G=\langle \gamma_1, \gamma_2 \rangle$ be a free subgroup of $\Gamma_3$. Then there exists a pair of hyperbolics in $\rho_2(G)$ with attracting and repelling eigenspaces that span $\mathbb{R}^4$.
\end{lemma}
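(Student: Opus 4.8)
The plan is to translate the linear-algebra conclusion into a statement about the geometry of the limit set, and then produce the desired pair of hyperbolics by a density argument. First I would record the geometric dictionary. For a hyperbolic $g\in SO^+(3,1)$, the attracting and repelling eigenspaces are the one-dimensional lightlike lines spanned by vectors $v_g^+,v_g^-$ lying over the ideal fixed points $g^+,g^-\in\partial\mathbb{H}^3=S^2$. Given two hyperbolics $h_1,h_2$, their four eigenspaces span $\mathbb{R}^4$ exactly when the four lightlike vectors $v_{h_1}^{\pm},v_{h_2}^{\pm}$ are linearly independent. The key observation is that four distinct null lines lie in a common $3$-dimensional subspace $V$ only if the restriction of the Minkowski form to $V$ has signature $(2,1)$, since a positive-definite or tangent-degenerate hyperplane contains at most one null line; and such a $V$ meets the light cone in the cone over a round circle in $S^2$, namely the ideal boundary of a totally geodesic plane. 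Hence the four eigenspaces fail to span $\mathbb{R}^4$ if and only if the four ideal points $h_1^{\pm},h_2^{\pm}$ either are not pairwise distinct or lie on a common circle. So the lemma is equivalent to producing two hyperbolics in $\rho_2(G)$ whose four fixed points are distinct and not concircular.

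Next I would rule out the degenerate case globally. Since $G$ is a nonabelian free group embedding in the discrete cocompact group $\rho_2(\Gamma_3)\subset SO^+(3,1)\cong PSL(2,\mathbb{C})$, the group $\rho_2(G)$ is discrete, non-elementary, and torsion-free, with every nontrivial element hyperbolic (a cocompact lattice has no parabolics). I claim its limit set $\Lambda=\Lambda(\rho_2(G))$ is not contained in any round circle. Indeed, $\Lambda$ is infinite, so if it lay on a circle $C$ then $C$ would be the unique circle through any three of its points and hence canonically determined by $\Lambda$; invariance of $\Lambda$ would then force $\rho_2(G)$ to preserve $C$, i.e. to be conjugate into the stabilizer of a totally geodesic plane, making it a non-elementary Fuchsian subgroup of $\Gamma_3$. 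This contradicts the fact, recorded in the introduction, that $\Gamma_3$ contains no non-elementary Fuchsian subgroups. Therefore there exist four points of $\Lambda$ that are not concircular.

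Finally I would assemble the two hyperbolics using the standard fact that, for a non-elementary discrete subgroup of $PSL(2,\mathbb{C})$, the set of ordered fixed-point pairs $(h^+,h^-)$ of hyperbolic elements is dense in $\Lambda\times\Lambda\setminus\Delta$. Since non-concircularity of a $4$-tuple is an open condition and is realized by some $4$-tuple of limit points by the previous step, I would pair those points as $(x_1,x_2)$ and $(x_3,x_4)$ and choose hyperbolics $h_1,h_2\in\rho_2(G)$ whose fixed-point pairs approximate $(x_1,x_2)$ and $(x_3,x_4)$ closely enough that $h_1^{\pm},h_2^{\pm}$ remain distinct and non-concircular. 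By the dictionary of the first paragraph, the attracting and repelling eigenspaces of $h_1,h_2$ then span $\mathbb{R}^4$, as required.

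I expect the main obstacle to be this last step: the honest justification that two group elements, rather than four unrelated ones, suffice, which rests on density of hyperbolic fixed-point pairs in $\Lambda\times\Lambda\setminus\Delta$ together with the openness of non-concircularity. The other point demanding care is the identification in the first paragraph that coplanar null vectors correspond exactly to concircular ideal points, including the signature bookkeeping that excludes definite and tangent hyperplanes; and it is precisely the absence of non-elementary Fuchsian subgroups in $\Gamma_3$ that removes the obstruction in the middle step.
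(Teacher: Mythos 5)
Your proof is correct, but it follows a genuinely different route from the paper's. The paper stays inside linear algebra: it applies Lemma \ref{Invariant} to the span of \emph{all} attracting and repelling eigenspaces of elements of $\rho_2(G)$ (an invariant subspace generated by lightlike vectors) to extract four elements $g_1,\dots,g_4$ whose attracting eigenvectors $w_1,\dots,w_4$ form a basis of $\mathbb{R}^4$, and then runs north--south dynamics on the products $g_1^kg_2^{-k}$ and $g_3^kg_4^{-k}$: their fixed points converge projectively to $w_1,w_2$ and $w_3,w_4$, so openness of linear independence yields the two required hyperbolics for large $k$. You bypass Lemma \ref{Invariant} entirely: you convert the spanning condition into the statement that the four ideal fixed points are distinct and not concircular, show the limit set of $\rho_2(G)$ cannot lie on a round circle (else one produces a non-elementary Fuchsian subgroup, the same arithmetic input the paper uses inside Lemma \ref{Invariant}), and then invoke density of loxodromic fixed-point pairs in $\Lambda\times\Lambda\setminus\Delta$ together with openness of non-concircularity. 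The two arguments share the same dynamical core --- the density-of-pairs fact you cite is itself usually proved by exactly the paper's $g_1^kg_2^{-k}$ trick --- but yours makes the geometric meaning of the no-Fuchsian-subgroup hypothesis transparent, while the paper's gets to reuse a lemma it needs anyway for Proposition \ref{Irreducible} and avoids importing facts about limit sets. Two points to tighten in your write-up: (i) in the signature bookkeeping, note explicitly that a $2$-dimensional subspace contains at most two null lines, so four distinct null lines always span at least three dimensions and the hyperplane case is the only degeneration to rule out; (ii) the stabilizer of a round circle in $PSL(2,\mathbb{C})$ is conjugate to $PGL(2,\mathbb{R})$, whose elements may swap the two complementary disks, so to literally obtain a Fuchsian (i.e.\ $PSL(2,\mathbb{R})$-conjugate) subgroup you should pass to the index-at-most-$2$ subgroup preserving each disk, which is still non-elementary; likewise the introduction records the no-Fuchsian fact for $\pi_1(\text{Vol3})$, and it passes to $\Gamma_3$ because $\pi_1(\text{Vol3})$ has finite index there.
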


\begin{proof}
	The subspace of $\mathbb{R}^4$ generated by all attracting and repelling eigenspaces of elements in $\rho_2(G)$ is a $\rho_2(G)$-invariant subspace generated by light like vectors. By Lemma \ref{Invariant}, this means we can obtain light like vectors $\{w_j\}_{j=1}^4$ forming a basis of $\mathbb{R}^4$ such that $w_j$ is an attracting eigenvector of $g_j\in \rho_2(G)$.\par 
	
	For $k\in \mathbb{N}$, consider the hyperbolics $g_1^kg_2^{-k}$ and $g_3^kg_4^{-k}$. As $k$ increases, the fixed points of $g_1^kg_2^{-k}$ converge projectively to $w_1$ and $w_2$, and the fixed points of $g_3^kg_4^{-k}$ converge projectively to $w_3$ and $w_4$. Linear independence is an open condition. Since $\{w_j\}_{j=1}^4$ is a basis for $\mathbb{R}^4$, this means for all large $k$ the attracting and repelling eigenspaces of $g_1^kg_2^{-k}$ and $g_3^kg_4^{-k}$ span $\mathbb{R}^4$.
\end{proof}

\begin{prop}\label{Irreducible}
	If $G=\langle \gamma_1, \gamma_2 \rangle$ is a free subgroup of $\Gamma_3$, then $\rho_2(G)=\langle g_1,g_2 \rangle$ is absolutely irreducible.
\end{prop}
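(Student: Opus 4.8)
The plan is to establish absolute irreducibility in two stages: first prove that $\rho_2(G)$ acts irreducibly over $\mathbb{R}$, and then upgrade this to absolute irreducibility by analyzing the commutant. The geometric input driving both stages is Lemma \ref{Invariant} together with the biproximality of $\rho_2$ at $v=2$: every $g=\rho_2(\gamma)$ with $\gamma$ of infinite order is a hyperbolic isometry, so (as recalled above) it has eigenvalues $\lambda,\lambda^{-1},e^{i\theta},e^{-i\theta}$ with $\lambda\in\mathbb{R}$, $|\lambda|>1$; in particular its eigenvalue of largest modulus is \emph{real and simple}, with a light-like eigenvector.

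First I would prove real irreducibility. Suppose $W\subseteq\mathbb{R}^4$ is a proper nonzero $\rho_2(G)$-invariant subspace. Because $\rho_2(G)\subseteq SO^+(3,1)$ preserves the Lorentzian form $q$, the orthogonal complement $W^\perp$ is again invariant, and so is the radical $W\cap W^\perp$. Every vector of the radical is light-like, so if the radical were nonzero then Lemma \ref{Invariant} would force $W\cap W^\perp=\mathbb{R}^4$, which is absurd since $q$ is nondegenerate; hence $W$ is nondegenerate and $\mathbb{R}^4=W\oplus W^\perp$. Since $q$ has signature $(3,1)$, exactly one of $W,W^\perp$ contains the time-like direction, say it is Lorentzian of signature $(p,1)$ while the other is positive definite. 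If the Lorentzian factor has dimension at least $2$ then its null cone spans it, exhibiting a proper nonzero invariant subspace generated by light-like vectors, contradicting Lemma \ref{Invariant}. The only remaining possibility is that the Lorentzian factor is a single time-like line; but such a line would be an invariant time-like eigenline, and no hyperbolic element has a real time-like eigenvector (its real eigenvectors are the light-like $a_g,r_g$). This contradiction shows no such $W$ exists, so $\rho_2(G)$ is irreducible over $\mathbb{R}$.

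To pass from real to absolute irreducibility I would invoke Schur's lemma: the commutant $D=\mathrm{End}_{\rho_2(G)}(\mathbb{R}^4)$ is a finite-dimensional real division algebra, hence $\mathbb{R}$, $\mathbb{C}$, or $\mathbb{H}$, and $\rho_2(G)$ is absolutely irreducible precisely when $D=\mathbb{R}$. Both nonreal cases are ruled out using the simple, real top eigenvalue of a single hyperbolic $g_1=\rho_2(\gamma_1)$. In the quaternionic case the complexification decomposes as $U\oplus U$ for a two-dimensional complex module $U$, so every eigenvalue of $g_1$ on $\mathbb{C}^4$ occurs with even multiplicity, contradicting that $\lambda$ is simple. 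In the complex case the complexification is $V\oplus\overline{V}$ with $V\cap\overline{V}=0$; the simple eigenline $\mathbb{C}a_1$ of $g_1$ lies in exactly one summand, but $a_1$ is real, so $\mathbb{C}a_1$ is conjugation-stable and therefore lies in both $V$ and $\overline{V}$, forcing $a_1\in V\cap\overline{V}=0$, again a contradiction. Hence $D=\mathbb{R}$ and $\rho_2(G)$ is absolutely irreducible.

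I expect the main obstacle to be conceptual rather than computational: real irreducibility does not by itself yield irreducibility over $\mathbb{C}$, and the heart of the argument is precisely closing this gap. The key leverage is that biproximality supplies a \emph{real} eigenvector attached to a \emph{simple} eigenvalue, which is exactly the data needed to obstruct both the complex and the quaternionic commutants. A secondary point requiring care is the reduction to nondegenerate invariant subspaces; this is what lets me move freely between $W$ and $W^\perp$, and it is handled cleanly by applying Lemma \ref{Invariant} to the radical.
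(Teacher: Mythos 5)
Your proof is correct, but it takes a genuinely different route from the paper's. The paper works over $\mathbb{C}$ from the start: it invokes Lemma \ref{Light Basis} to arrange that the attracting and repelling eigenspaces of $g_1,g_2$ span $\mathbb{C}^4$, rules out complex invariant subspaces of dimension $1$ and $3$ by the common-eigenvector/Fuchsian-subgroup argument of Lemma \ref{Invariant}, and kills dimension $2$ by showing that such a $W$ would have to be spanned by rotation eigenvectors (Claim \ref{rots}, via orbit-spanning), which are orthogonal with respect to the Hermitian form $\langle x,w\rangle = w^*Jx$ to all attracting and repelling eigenvectors (Claim \ref{contain}) --- a dimension-count contradiction. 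You instead prove irreducibility over $\mathbb{R}$ first, by a radical-and-signature analysis of the Lorentzian form (applying Lemma \ref{Invariant} to the radical $W\cap W^\perp$ and to any Lorentzian summand of dimension at least $2$, and excluding an invariant time-like line), and then upgrade to absolute irreducibility by Schur's lemma and the Frobenius classification of real division algebras, using biproximality --- the simple \emph{real} top eigenvalue of a single hyperbolic with its \emph{real} eigenvector --- to rule out commutant $\mathbb{C}$ and $\mathbb{H}$. Your route avoids Lemma \ref{Light Basis} and the Hermitian-form computation entirely and isolates the geometric input to two facts (Lemma \ref{Invariant} and one simple real eigenvalue), at the cost of importing the representation-type machinery; the paper's route is more hands-on and self-contained, handling the complex invariant subspaces directly. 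One small slip to fix: your parenthetical assertion that the real eigenvectors of a hyperbolic are exactly the light-like ones $a_g, r_g$ fails for pure translations ($\theta=0$), which have a two-dimensional space-like $1$-eigenspace; the claim you actually need --- that no hyperbolic preserves a time-like line --- still holds, since an invariant time-like line projects to a fixed point of $g_1$ in $\mathbb{H}^3$, contradicting that $g_1$ is hyperbolic.
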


\begin{proof}
	By Lemma \ref{Light Basis}, passing to a free subgroup if necessary, we can assume without loss of generality that the $\mathbb{C}$-span of the attracting and repelling eigenspaces of $g_1$ and $g_2$ is $\mathbb{C}^4$. Let $W\subseteq \mathbb{C}^4$ be a $\rho_2(G)$-invariant subspace. As in the proof of Lemma \ref{Invariant}, if dim$_\mathbb{C}W$ is 1 or 3, then $[\rho_2(G),\rho_2(G)]$ would contain non-elementary Fuchsian subgroups. But it is known $\rho_2(G)$ contains no such subgroup. \par 
	
	Suppose for contradiction dim$_\mathbb{C}W=2$. Since $g_1$ and $g_2$ have 4 distinct eigenvalues and $W$ is $G$-invariant, $W$ must contain two eigenspaces in $\mathbb{C}^4$ of $g_1$ and $g_2$.
	
	\begin{Claim}\label{rots}
		$W$ contains both rotation eigenspaces of $g_1$ and $g_2$.
	\end{Claim}
	
	\begin{proof}[(Proof of Claim \ref{rots})] Let $x$ be an attracting or repelling eigenvector of $g_1$ or $g_2$. Then span$_{\mathbb{R}}\{\rho_2(G)(x)\}$, the subspace of $\mathbb{R}^4$ generated by the $\rho_2(G)$-orbit of $x$, is $\rho_2(G)$-invariant. It is also generated by light like vectors. By Lemma \ref{Invariant}, there exists $\{x_j\}_{j=1}^4\subseteq \rho_2(G)(x)$ such that span$_{\mathbb{R}}\{x_j\}_{j=1}^4=\mathbb{R}^4$. Then span$_{\mathbb{C}}\{x_j\}_{j=1}^4=\mathbb{C}^4$. So we conclude $x\notin W$ because otherwise, we would have $\{x_j\}_{j=1}^4\subseteq \rho_2(G)(x)\subseteq W$, which cannot hold since we are assuming dim$_{\mathbb{C}}W=2$. Therefore, the only eigenspaces of $g_1$ and $g_2$ that $W$ can contain are their rotation eigenspaces. 		
	\end{proof}
	
	In what follows, let $J$ denote the symmetric form defining the $SO(3,1;\mathbb{R})$ version of $\mathbb{H}^3$. Let $\langle , \rangle$ denote the form on $\mathbb{C}^4$ defined by 
	\[	\langle x, w \rangle = w^*Jx.	\]
	Then $\langle , \rangle$ is a Hermitian form on $\mathbb{C}^4$ that is preserved by $\rho_2(G)$. For any subspace $V\subseteq \mathbb{C}^4$, put 
	\[	V^\perp:=\{	w\in \mathbb{C}^4 : \forall v\in V,\langle v, w \rangle =0	\}.	\]
	For any subspace $V$, $V^\perp$ is also a subspace and we have 
	\[\text{dim}_\mathbb{C}V+\text{dim}_\mathbb{C}V^\perp = 4.\]
	\begin{Claim}\label{contain}
		For $j=1,2$, if $r$ is an $e^{i\theta}$ eigenvector of $g_j$, then span$_{\mathbb{C}}\{r\}^\perp$ contains the attracting and repelling eigenspaces of $g_j$.
	\end{Claim}
	
	\begin{proof}[(Proof of Claim \ref{contain})]
		Let $x$ be a $\lambda\in \mathbb{R}$ eigenvector of $g_j$, where $|\lambda|\neq 1$. For any $k$, we have
		\begin{equation}\label{perp}
		|\langle r, x \rangle| = |\langle g_j^k(r), g_j^k(x) \rangle|= |e^{ik\theta}\lambda^k\langle r,x \rangle| = |\lambda|^k|\langle r,x \rangle|.
		\end{equation} 
		Since equation \ref{perp} holds for any $k\in \mathbb{Z}$ and $|\lambda|\neq 1$, we must have $\langle r,x \rangle=0$.
	\end{proof}	
	
	By Claim \ref{rots}, the rotation eigenspaces of $g_j$ generate $W$, for $j=1$ or $2$. In particular, $g_j$ acts on $W$ by rotations. This means Claim \ref{contain} implies that for any $w\in W$, span$_{\mathbb{C}}\{w\}^\perp$ contains the attracting and repelling eigenspaces of $g_1$ and $g_2$. But the $\mathbb{C}$-span of the attracting and repelling eigenspaces of $g_1$ and $g_2$ is $\mathbb{C}^4$, while dim$_\mathbb{C}\{w\}^\perp=3$. This gives the desired contradiction, and we conclude dim$_\mathbb{C}W=4$ as desired.
\end{proof}

\begin{prop}\label{ASirreducible}
	 
	\begin{enumerate}[(1)]
		\item If $G=\langle \gamma_1, \gamma_2 \rangle$ is a non-abelian free subgroup of $\Gamma_3$, then $\rho_v(G)$ is absolutely irreducible for all but finitely many specializations of $v$.
		\item There exists a subset $\mathcal{S}\subset [2,\infty)$ such that $[2,\infty)\setminus \mathcal{S}$ is at most countable and the image of all non-abelian rank 2 free subgroups of $\Gamma_3$ under $\rho_v$ is strongly irreducible for all $v\in \mathcal{S}$.
	\end{enumerate}
\end{prop}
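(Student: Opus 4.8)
The plan is to deduce part (2) from part (1) by a Nielsen--Schreier argument, so the technical heart lies in part (1). For part (1) I would use the standard characterization of absolute irreducibility via Burnside's theorem: a subgroup $\Delta\leq GL(4,\mathbb{C})$ acts absolutely irreducibly on $\mathbb{C}^4$ if and only if the matrices $\{\delta:\delta\in\Delta\}$ linearly span the full algebra $M_4(\mathbb{C})$. By Proposition \ref{Irreducible}, $\rho_2(G)$ is absolutely irreducible, so there exist sixteen words $h_1,\dots,h_{16}$ in the generators of $G$ such that $\rho_2(h_1),\dots,\rho_2(h_{16})$ form a basis of $M_4(\mathbb{C})$. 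I would then fix these sixteen words once and for all, form the $16\times 16$ matrix $M(v)$ whose columns are the coordinate vectors of $\rho_v(h_1),\dots,\rho_v(h_{16})$, and set $D(v)=\det M(v)$. Whenever $D(v)\neq 0$ these sixteen matrices are linearly independent, hence span $M_4(\mathbb{C})$, so $\rho_v(G)$ is absolutely irreducible.

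The key point is that $D(v)$ is a nonzero algebraic function of $v$ with only finitely many zeros. All entries of the generator matrices $\rho_v(u)$ and $\rho_v(c)$ lie in the field $K=\mathbb{Q}(v)\big(\sqrt{v^2+8},\sqrt{v^2-4}\big)$ (noting that $\sqrt{(v^2-4)/(v^2+8)}=\sqrt{v^2-4}/\sqrt{v^2+8}$), a finite extension of $\mathbb{Q}(v)$; hence so do the entries of each $\rho_v(h_i)$ and the determinant $D(v)\in K$. Since $D(2)\neq 0$, $D$ is not the zero element of $K$. Taking the norm $N=N_{K/\mathbb{Q}(v)}(D)\in\mathbb{Q}(v)$ gives a nonzero rational function whose zero set contains that of $D$; because the denominators that occur are powers of $v^2+8$, which is nonvanishing on the real locus of interest, $N$ has only finitely many zeros there, and therefore so does $D$. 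This proves (1): $\rho_v(G)$ is absolutely irreducible for all $v$ outside the finite set $\{D=0\}$.

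For part (2) I would first observe that $\Gamma_3$ is countable, so it contains only countably many $2$-generated subgroups and in particular only countably many rank $2$ free subgroups $\{H_n\}$. By (1), each $H_n$ has an associated finite exceptional set $B_n\subset[2,\infty)$ off of which $\rho_v(H_n)$ is absolutely irreducible; set $\mathcal{S}=[2,\infty)\setminus\bigcup_n B_n$, a co-countable subset. Now fix $v\in\mathcal{S}$ and an arbitrary rank $2$ free subgroup $G\leq\Gamma_3$. To see that $\rho_v(G)$ is strongly irreducible, let $G'\leq G$ be any finite index subgroup. By Nielsen--Schreier, $G'$ is free of rank $1+[G:G']\geq 2$, hence non-abelian, so it contains two non-commuting elements, which generate a rank $2$ free subgroup $G''\leq G'\leq\Gamma_3$. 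Since $v\in\mathcal{S}$, the image $\rho_v(G'')$ is absolutely irreducible, hence irreducible; and as $G''\leq G'$, every $\rho_v(G')$-invariant subspace is $\rho_v(G'')$-invariant, so $\rho_v(G')$ is irreducible as well. Since $G'$ was an arbitrary finite index subgroup, $\rho_v(G)$ is strongly irreducible, uniformly over every rank $2$ free $G$ and every $v\in\mathcal{S}$.

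I expect the main obstacle to be the transfer in part (1) from the single special value $v=2$, where irreducibility is geometric (Proposition \ref{Irreducible}), to generic $v$: one must verify that $D(v)$ genuinely defines a nonzero element of $K$ and then control its zero locus, which is where the bookkeeping with the radicals $\sqrt{v^2+8}$ and $\sqrt{v^2-4}$ and the norm computation enters. Once (1) is established, part (2) is essentially formal, its only inputs being that finite index subgroups of a rank $2$ free group are non-abelian free and that absolute irreducibility passes to overgroups.
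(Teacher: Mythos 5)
Your proposal is correct and follows essentially the same route as the paper: part (1) via Burnside's theorem, a $16\times16$ determinant $D(v)$ built from elements spanning $M_4(\mathbb{C})$ at $v=2$, and the fact that a nonzero algebraic function of $v$ has finitely many zeros; part (2) by countability of rank 2 free subgroups together with the fact that finite index subgroups of a non-abelian free group contain rank 2 free subgroups (you use Nielsen--Schreier where the paper cites a Ping-Pong argument, a cosmetic difference). Your handling of the field $K=\mathbb{Q}(v)\bigl(\sqrt{v^2+8},\sqrt{v^2-4}\bigr)$ and the norm down to $\mathbb{Q}(v)$ is in fact slightly more careful than the paper's brief treatment.
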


\begin{proof}
	(1) Proposition \ref{Irreducible} implies $\rho_2(G)$ is absolutely irreducible. Burnside's lemma (see \cite{Burnside}) implies that the group algebra $\mathbb{C}\rho_v(G)$ contains elements  $A_1(v),\ldots,A_{16}(v)\in \mathbb{C}\rho_v(G)$ such that $\{A_1(2),\ldots,A_{16}(2)\}$ is a basis for $M(4,\mathbb{C})$. Put 
	\[	\delta(v):= \det[A_1(v),\ldots,A_{16}(v)].	\]
	All the matrix entries of elements in $\rho_v(G)$ lie in $\mathbb{Q}(v,\sqrt{v^2-4})$. This means we can regard $\delta$ as an algebraic function of the single complex parameter $v$.
	
	The failure of absolute irreducibility of $\rho_t(G)$ at the specialization $v=t$ implies either $\delta(t)=0$, or $t$ is a pole of $\delta$. The absolute irreducibility of $\rho_2(G)$ implies $\delta(2)\neq 0$, so $\delta$ has at most finitely many zeros or poles. We conclude that $\rho_v(G)$ is absolutely irreducible for all but finitely many $v$.
	
	(2) Since $\Gamma_3$ contains countably many rank 2 free groups, and all finite index subgroups of any non-abelian free $G<\Gamma_3$ contain a free group of rank 2 by a Ping-Pong argument, part (2) of this proposition follows immediately from part (1).
\end{proof}

\section{Complex Hyperbolic Geometry}\label{scplx}
Recall that for $v\in (-2,2)$, $\rho_v(\Gamma_3)$ lies in $SU(3,1)$. In this section, we give a rapid review of the necessary background in complex hyperbolic geometry for our future arguments. A standard and comprehensive reference for this material is \cite{Goldman}. 

Let $\mathbb{V}$ be the vector space $\mathbb{C}^4$ equipped with the Hermitian form defined by 
\[	\langle z, w \rangle = w^*Hz,	\]
where $H=diag(-1,1,1,1)$ is the diagonal matrix representing the Hermitian form. If $H'$ is any other Hermitian matrix with signature (3,1), then there is a $4\times 4$ matrix $C$ so that $\bar{C}^TH'C=H$. Consider the following subspaces:
\[	\mathbb{V}_-=\{	z\in \mathbb{V}:\langle z,z \rangle <0	\}, \mathbb{V}_0=\{z\in \mathbb{V}: \langle z,z \rangle=0 \}.	\] 
Complex hyperbolic space $\mathbb{CH}^3$ is defined to be the image of $\mathbb{V}_-$ under the canonical projection $\mathbb{V}\setminus\{0\}\rightarrow \mathbb{C}P^3$ onto complex projective space. The image of $\mathbb{V}_0$ under this projection gives the ideal boundary $\partial\mathbb{CH}^3$.

The holomorphic isometry group of $\mathbb{CH}^3$ is the projective unitary group $PSU(3,1)=SU(3,1)/\{\pm I, \pm iI\}$, although for convenience we will lift to the four fold cover $SU(3,1)$ when analyzing isometries. The classification of isometries of $\mathbb{CH}^3$ mirrors that of $\mathbb{H}^3$. An isometry is \emph{elliptic} if it fixes at least one point of $\mathbb{CH}^3$. It is \emph{parabolic} if it fixes exactly one point, and that point lies in $\partial \mathbb{CH}^3$. It is \emph{hyperbolic} if it fixes exactly two points, both of which lie in $\partial \mathbb{CH}^3$.

The eigenvalues of elliptic and parabolic elements all have norm 1. Hyperbolic elements have a pair of eigenvalues of the form $re^{i\theta},r^{-1}e^{i\theta}$, $r>1$, with associated one dimensional eigenspaces corresponding to its two fixed points in $\partial \mathbb{CH}^3$.

A \emph{complex hyperbolic Kleinian group} is a discrete subgroup of $SU(3,1)$. If $\Gamma<SU(3,1)$ is a complex hyperbolic Kleinian group and $x\in \mathbb{CH}^3$, then its \emph{limit set} $\Lambda_\Gamma$ is the set of accumulation points of the orbit $\Gamma(x)$ in $\partial\mathbb{CH}^3$. It is known that $\Lambda_\Gamma$ is independent of the choice of $x$. A complex hyperbolic Kleinian group $\Gamma$ is called \emph{elementary} if $|\Lambda_\Gamma|\leq 2$, and \emph{non-elementary} otherwise. 

The main result we need from complex hyperbolic geometry is the following theorem of Kim and Kim in \cite{Kim}:

\begin{thm}\label{Kim}
	Let $\Gamma<SU(3,1)$ be a non-elementary complex hyperbolic Kleinian group. Then $tr(\gamma)\in \mathbb{R}$ for all $\gamma \in \Gamma$ if and only if $\Gamma$ is conjugate to a subgroup of $SO(3,1)$ or $SU(1,1)\times SU(2)$.
\end{thm}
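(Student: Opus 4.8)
The plan is to treat the inclusion $\Gamma\hookrightarrow SU(3,1)\subset GL(4,\mathbb{C})$ as a $4$-dimensional complex representation $\rho$ and to play the real-trace hypothesis against the invariant Hermitian form $H$ (signature $(3,1)$, so $\rho(\gamma)^*H\rho(\gamma)=H$). The backward implication is immediate: elements of $SO(3,1)$ are real matrices, while an element of $SU(1,1)$ has the form $\left(\begin{smallmatrix} a & b\\ \bar b & \bar a\end{smallmatrix}\right)$ with trace $2\,\mathrm{Re}(a)\in\mathbb{R}$ and an element of $SU(2)$ has eigenvalues $e^{\pm i\theta}$ with trace $2\cos\theta\in\mathbb{R}$, so any block-diagonal element of $SU(1,1)\times SU(2)$ has real trace. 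For the forward implication I would first reduce to the completely reducible case, and this is exactly where non-elementarity enters. If $U\subseteq\mathbb{C}^4$ is $\rho$-invariant and $H|_U$ is nondegenerate, then $\mathbb{C}^4=U\oplus U^\perp$ splits $\Gamma$-invariantly; if $H|_U$ is degenerate, then $U\cap U^\perp$ is a nonzero invariant totally isotropic subspace. Since $H$ has signature $(3,1)$ its Witt index is $1$, so such a subspace is a line, i.e.\ a point of $\partial\mathbb{CH}^3$ fixed by $\Gamma$; as $\Lambda_\Gamma$ is contained in every nonempty closed invariant subset, this forces $|\Lambda_\Gamma|\le 1$, contradicting non-elementarity. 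Peeling off nondegenerate summands then writes $\mathbb{C}^4$ as an $H$-orthogonal sum of nondegenerate $\rho$-irreducible pieces.

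On each such semisimple piece I would run the Frobenius--Schur dichotomy. The hypothesis $\operatorname{tr}\rho(\gamma)\in\mathbb{R}$ says the character of the complex conjugate $\bar\rho$ equals that of $\rho$, so by character theory $\bar\rho\cong\rho$; choose $M$ with $\bar\rho=M\rho M^{-1}$. Conjugating this relation shows $\bar M M$ commutes with all of $\rho$, so by Schur's lemma $\bar M M=\lambda I$, and comparing $\bar M M=\lambda I$ with its conjugate $M\bar M=\bar\lambda I$ forces $\lambda\in\mathbb{R}$; rescaling $M$ normalizes $\lambda=\pm1$. Meanwhile, $\rho^*H\rho=H$ gives $\bar\rho=H\rho^{-T}H$, and combining the two descriptions of $\bar\rho$ yields a nondegenerate $\rho$-invariant bilinear form $B=HM$ (so that $\rho^{T}B\rho=B$), which is symmetric when $\lambda=+1$ and alternating when $\lambda=-1$.

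The two target groups now appear according to signature parity. In the fully irreducible case the quaternionic option $\lambda=-1$ would endow $H$ with a compatible quaternionic structure $J$ (antilinear, $J^2=-1$), and since $v,Jv$ are $H$-orthogonal and independent with $\langle Jv,Jv\rangle=\overline{\langle v,v\rangle}$, the negative index of $H$ would be even; but signature $(3,1)$ has negative index $1$, so $\lambda=+1$ is forced. A Hilbert 90-type descent from $\bar M M=I$ produces $S$ with $S\rho S^{-1}$ real-valued, and matching the resulting real structure to $H$ places $\rho$ in the signature-$(3,1)$ real form $SO(3,1)$. In the reducible case the $H$-orthogonal irreducible summands have signatures summing to $(3,1)$: a $1$-dimensional summand is a character of unit modulus and real trace, hence $\pm1$; a definite $3$-dimensional summand would give a compact, hence finite, hence elementary image and is excluded; the surviving configurations are a $(2,1)\oplus(1,0)$ split landing in $SO(2,1)\times\{\pm1\}\subset SO(3,1)$, or a $(1,1)\oplus(2,0)$ split landing in $SU(1,1)\times SU(2)$.

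The main obstacle I expect is the reduction to complete reducibility: translating the non-elementary hypothesis into the nonexistence of invariant totally isotropic subspaces and making the inductive peeling-off rigorous in the degenerate-restriction cases (the Witt-index-$1$ observation is what makes this clean, but the bookkeeping of signatures across all summand configurations is delicate). A secondary technical point is the signature-parity argument excluding the irreducible quaternionic type $Sp(1,1)\subset SU(2,2)$, together with checking that the real descent lands in $SO(3,1)$ specifically rather than in a different real orthogonal form; both hinge on keeping the Hermitian and bilinear structures simultaneously in standard position.
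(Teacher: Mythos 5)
The paper offers no proof of this statement to compare against: Theorem \ref{Kim} is quoted verbatim from Kim--Kim \cite{Kim} and used as a black box, so your argument can only be judged on its own terms. Its skeleton is the natural one, and the irreducible case essentially works: minimality of the limit set of a non-elementary discrete group plus the Witt-index-$1$ observation does force every invariant subspace to have nondegenerate $H$-restriction, hence complete reducibility into $H$-orthogonal irreducible summands; Brauer--Nesbitt gives $\bar\rho\cong\rho$ from equality of characters; your parity argument correctly kills the quaternionic case (if $\langle v,v\rangle<0$ then $\mathrm{span}\{v,Jv\}$ would be a negative-definite $2$-plane, impossible in signature $(3,1)$); and after Hilbert 90 the real group preserves both the descended real structure and $H$, which Schur's lemma forces to be proportional, so you land in $SO(3,1)$. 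One side remark is false, though harmless to this route: for indefinite $H$ the sign $\lambda$ in $\bar MM=\lambda I$ does \emph{not} determine the symmetry of $B=HM$ (for $SU(1,1)$ itself one has $\lambda=+1$, real type, yet $B$ is alternating); your proof never actually uses that claimed equivalence, but it should be deleted.

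The genuine gap is in the reducible case, and it is not just bookkeeping. The hypothesis makes the \emph{total} trace real; it does not make the character of each summand real --- Brauer--Nesbitt only says complex conjugation \emph{permutes} the irreducible summands up to isomorphism. Your assertion that a $1$-dimensional summand is ``a character of unit modulus and real trace, hence $\pm1$'' is false, and the configuration it overlooks is precisely one that occurs: take a non-elementary discrete $\Gamma_0<SU(1,1)$, a unitary character $z:\Gamma_0\to U(1)$ not valued in $\{\pm1\}$, and the representation $\gamma\mapsto A(\gamma)\oplus z(\gamma)\oplus\overline{z(\gamma)}$ of type $(1,1)\oplus(1,0)\oplus(1,0)$. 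Every total trace is real (elements of $SU(1,1)$ have real trace and $z+\bar z\in\mathbb{R}$), the group is a non-elementary complex hyperbolic Kleinian group, but the individual characters are not real; moreover $z\bar z=1$ forces $\det A=1$, so this lands in $SU(1,1)\times SU(2)$ with $\mathrm{diag}(z,\bar z)$ in the maximal torus of $SU(2)$ --- i.e.\ this missing configuration is one of the ways the $SU(2)$ factor in the conclusion actually arises, and your list of surviving splittings ($(2,1)\oplus(1,0)$ and $(1,1)\oplus(2,0)$ only) omits it. The repair is to analyze the conjugation-permutation of summands explicitly: any swap involving a definite summand forces the entire group to be bounded, hence finite, hence elementary (e.g.\ this is how $(3,0)\oplus(0,1)$ and a swapped $(1,1)\oplus(2,0)$ die); in the surviving configurations conjugation therefore either fixes every summand (then all characters are real and your analysis applies) or swaps a conjugate pair of unit characters as above, and only after this dichotomy can the signature bookkeeping be run.
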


We conclude this section with a basic observation that will aid in our future applications of Theorem \ref{Kim}.

\begin{lemma}\label{comp}
	Let $G=\langle \gamma_1, \gamma_2 \rangle$ be a non-abelian free subgroup of $\Gamma_3$. Then there exists $\epsilon>0$ such that for all $v\in (2-\epsilon,2)$, $\rho_v(G)$ is a non-elementary complex hyperbolic Kleinian group.
\end{lemma}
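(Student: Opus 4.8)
The plan is to show that for $v$ near $2$, the limit set $\Lambda_{\rho_v(G)}$ contains at least three points, which by definition makes $\rho_v(G)$ non-elementary. Since $G=\langle \gamma_1,\gamma_2\rangle$ is free and $\gamma_1,\gamma_2$ do not commute, the benchmark representation $\rho_2(G)<SO^+(3,1)$ is a discrete, non-elementary subgroup: indeed $g_1=\rho_2(\gamma_1)$ and $g_2=\rho_2(\gamma_2)$ are non-commuting hyperbolics, so their four fixed points at infinity already give at least three distinct limit points. The goal is to transport this abundance of limit points to nearby parameters $v\in(2-\epsilon,2)$, where $\rho_v(G)$ lives in $SU(3,1)$.

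First I would fix three group elements $\alpha,\beta,\delta\in G$ whose images under $\rho_2$ are hyperbolic isometries with pairwise distinct attracting fixed points in $\partial\mathbb{H}^3$ — these exist precisely because $\rho_2(G)$ is non-elementary. The classification of isometries recorded in Section \ref{scplx} distinguishes hyperbolic elements by the eigenvalue condition that there is a pair of eigenvalues $re^{i\theta},r^{-1}e^{i\theta}$ with $r>1$; equivalently the element has an eigenvalue of modulus strictly greater than $1$. Being hyperbolic is therefore an open condition on the matrix, detected by the moduli of eigenvalues, and the attracting fixed point depends continuously on the matrix away from coincidences of eigenvalue moduli. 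Since the matrix entries of $\rho_v(\alpha),\rho_v(\beta),\rho_v(\delta)$ are continuous (indeed algebraic) functions of $v$ on a neighborhood of $2$, there is an $\epsilon>0$ so that for all $v\in(2-\epsilon,2)$ these three elements remain hyperbolic in $SU(3,1)$ and their attracting fixed points in $\partial\mathbb{CH}^3$ stay close to the corresponding $\rho_2$-fixed points.

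Next I would use continuity to conclude that the three attracting fixed points remain pairwise distinct for $v$ close enough to $2$: distinctness is an open condition, and the three $\rho_2$-fixed points are distinct by construction, so after possibly shrinking $\epsilon$ the three fixed points of $\rho_v(\alpha),\rho_v(\beta),\rho_v(\delta)$ are three distinct points of $\partial\mathbb{CH}^3$. Each such attracting fixed point is the limit of the orbit under the powers of the corresponding element, hence lies in $\Lambda_{\rho_v(G)}$; thus $|\Lambda_{\rho_v(G)}|\geq 3>2$ and $\rho_v(G)$ is non-elementary. Finally, discreteness: Theorem 2.3 of \cite{Flexing} (quoted in the introduction) already furnishes $\epsilon>0$ with $\rho_v$ discrete and faithful for $v\in(2-\epsilon,2)$, so $\rho_v(G)$ is a discrete subgroup of $SU(3,1)$, i.e.\ a complex hyperbolic Kleinian group. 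Taking the minimum of the two thresholds for $\epsilon$ gives the claim.

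The main obstacle I anticipate is verifying that the relevant elements stay \emph{hyperbolic} rather than degenerating to parabolic or elliptic as $v\to 2$ from below: the eigenvalue structure $\lambda,1/\lambda,e^{i\theta},e^{-i\theta}$ at $v=2$ is special to $SO^+(3,1)$, and one must check that the real eigenvalue pair of modulus $\neq 1$ persists as the representation deforms into $SU(3,1)$ rather than collapsing onto the unit circle. This is where the openness of the hyperbolicity condition and the explicit continuity of eigenvalues in $v$ do the real work; choosing elements $\alpha,\beta,\delta$ that are already \emph{strongly} hyperbolic at $v=2$ (large translation length, eigenvalue modulus bounded well away from $1$) makes the margin for this persistence as large as possible.
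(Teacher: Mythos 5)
Your proposal is correct and follows the same basic strategy as the paper's proof: both get discreteness of $\rho_v(G)$ from the deformation result in \cite{Flexing}, and both use continuity of eigenvalues in $v$ to see that suitable elements of $\rho_v(G)$ remain hyperbolic for $v$ near $2$ (an eigenvalue of modulus strictly greater than $1$ persists, while elliptic and parabolic elements of $SU(3,1)$ have only unit-modulus eigenvalues), so that fixed points of these elements force $|\Lambda_{\rho_v(G)}|>2$. The one place you genuinely diverge is the final count of limit points. The paper works at each fixed $v$ separately: since $\rho_v(\gamma_1)$ and $\rho_v(\gamma_2)$ are non-commuting hyperbolics in a discrete group, their fixed-point sets are disjoint, giving four points of $\Lambda_{\rho_v(G)}$; this needs no control of how fixed points move with $v$, only eigenvalue continuity. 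You instead track the attracting fixed points themselves as $v$ varies: three elements with pairwise distinct attracting fixed points at $v=2$ keep distinct fixed points for nearby $v$, because the top eigenvalue stays simple in modulus, so its eigenline in $\mathbb{C}P^3$ varies continuously, and distinctness of finitely many points is an open condition. Your route requires slightly more analytic input (continuity of eigenvectors rather than just eigenvalues), but in exchange it avoids invoking the fact, asserted without proof in the paper, that non-commuting hyperbolic isometries of $\mathbb{CH}^3$ generating a discrete group must have disjoint fixed-point sets. Both arguments are complete; yours is arguably more self-contained at this step.
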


\begin{proof}
	By Proposition 2.4 of \cite{Flexing}, we can choose $\epsilon>0$ such that $\rho_v$ is discrete and faithful for all $v\in (2-\epsilon,2)$, meaning $\rho_v(G)$ will be a complex hyperbolic Kleinian group for all such $v$. Since $\rho_2(\gamma_j)\in SO(3,1)$ are loxodromics of real hyperbolic space for $j=1,2$, they both have a pair of real eigenvalues $\lambda_j,\lambda_j^{-1}$ for some $\lambda_j>1$, each of which corresponding to distinct fixed points at infinity. The eigenvalues of $\rho_v(\gamma_j)$ depend continuously on $v$ so we can choose $\epsilon>0$ such that $\rho_v(\gamma_j)$ has a pair of eigenvalues that are not of unit norm, for all $v\in (2-\epsilon,2)$. That is we can choose $\epsilon>0$ so that for all $v\in (2-\epsilon,2)$, $\rho_v(\gamma_j)\in SU(3,1)$ will be a pair of non-commuting complex hyperbolics. This means $\rho_v(\gamma_1)$ and $\rho_v(\gamma_2)$ will have disjoint fixed point sets. Since $\Lambda_{\rho_v(G)}$ contains the attracting and repelling fixed points of $\rho_v(\gamma_j)$, we conclude $\rho_v(G)$ is also non-elementary for all such $v$.      
\end{proof}

\section{Proof of Theorems}\label{smain}

Our first goal of this section will be to show that if $G<\Gamma_3$ is a free group of rank $2$ and $v\in (2,\infty)$ is such that $\rho_v(G)$ is strongly irreducible, then $\rho_v(G)$ is Zariski dense in $SL(4,\mathbb{R})$, except for possibly finitely many exceptional specializations of $v$. Once this is established, we then show $\rho_v(\Gamma_3)$ is strongly dense in $SL(4,\mathbb{R})$, with possibly countably many exceptional specializations of $v\in (2,\infty)$. Analogous results for $SU(3,1)$ deformations will then be clear once our techniques are established. 

Our analysis of Zariski closures is based on the following result of Benoist, whose restatement can be found in Proposition 3.6 of \cite{semisimple}:

\begin{thm}\label{Benoist}
	Suppose $\Gamma$ is a strongly irreducible subgroup of $SL(n+1,\mathbb{R})$ which preserves a properly convex $\Omega\subset \mathbb{RP}^n$. Let $\mathcal{G}$ be the Zariski closure of $\Gamma$. Then $\mathcal{G}$ is a Zariski-connected real semi-simple Lie group.
\end{thm}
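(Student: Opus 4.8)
The plan is to pin down the identity component $\mathcal{G}^0$ by killing its radical in two stages: the unipotent part using only irreducibility, and the central toral part using the properly convex hypothesis, which is the only place that hypothesis is genuinely needed. First I would reduce everything to $\mathcal{G}^0$. Writing $\Gamma^0 = \Gamma \cap \mathcal{G}^0$, this is a finite-index subgroup of $\Gamma$, and since a finite union of translates commutes with Zariski closure one checks that $\mathcal{G}^0$ is exactly the Zariski closure of $\Gamma^0$. Strong irreducibility then forces $\Gamma^0$ to act irreducibly on $V = \mathbb{R}^{n+1}$; because ``preserving a given subspace'' is a Zariski-closed condition, $\mathcal{G}^0$ inherits this irreducibility, so I may assume $\mathcal{G}^0$ acts irreducibly.

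Next, reductivity. Let $U$ denote the unipotent radical of $\mathcal{G}^0$, a normal closed connected subgroup. By Kolchin's theorem the fixed space $V^U$ is nonzero, and normality gives $g\cdot V^U = V^{\,gUg^{-1}} = V^U$ for all $g\in\mathcal{G}^0$, so $V^U$ is $\mathcal{G}^0$-invariant. Irreducibility forces $V^U = V$, i.e.\ $U$ acts trivially; since the action is faithful, $U = 1$. Hence $\mathcal{G}^0$ is reductive and decomposes as $\mathcal{G}^0 = S\cdot T$ with $S = [\mathcal{G}^0,\mathcal{G}^0]$ semisimple and $T = Z(\mathcal{G}^0)^0$ a central torus, so it remains only to show $T = 1$.

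The central step, where proper convexity enters, is eliminating $T$. By the real form of Schur's lemma the commutant $\mathrm{End}_{\mathcal{G}^0}(V)$ is a division algebra $\mathbb{R}$, $\mathbb{C}$, or $\mathbb{H}$, and $T$ lies in its unit group. An $\mathbb{R}$-split direction in $T$ would act by a positive homothety $\lambda\,\mathrm{id}$ of determinant $\lambda^{n+1}$, which the constraint $\mathcal{G}\subset SL(n+1,\mathbb{R})$ forbids unless $\lambda = 1$; hence $T$ is compact. A nontrivial compact direction acts by a rotation $e^{i\theta}$ on the underlying complex (or quaternionic) structure, hence by a genuine rotation on some real $2$-plane $P\subset V$. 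Taking a point $x$ in the sharp convex cone $C$ over $\Omega$, the orbit of $x$ under this circle is an ellipse around the origin in $P$, and its convex hull is a full elliptical disk lying in $\overline{C}$ and containing $0$ in its relative interior, which forces $\overline{C}$ to contain a line and contradicts the sharpness of $C$. Therefore $T = 1$ and $\mathcal{G}^0$ is semisimple.

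For the Zariski-connectedness claim I would argue that the finite component group $\mathcal{G}/\mathcal{G}^0$ is trivial, using that the determinant-one condition already removes the reflection-type extensions a group merely preserving $\Omega$ could carry, while strong irreducibility removes the imprimitive extensions normalizing a torus; combined with the uniqueness up to scale and duality of the $\mathcal{G}^0$-invariant sharp cone, no outer component can survive inside $SL(n+1,\mathbb{R})$. The hard part is the third step: making precise exactly how proper convexity obstructs a central compact torus. Reductivity and the killing of the split torus are formal consequences of irreducibility and the determinant-one condition, whereas the rotation/convex-hull argument, together with its bookkeeping in the quaternionic commutant case, is where the geometric hypothesis does the real work and is the only place proper convexity is used in an essential way.
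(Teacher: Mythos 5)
A preliminary remark on the comparison you were asked about: the paper does not prove Theorem \ref{Benoist} at all --- it is a result of Benoist quoted from the literature (the paper points to Proposition 3.6 of \cite{semisimple} for a restatement) --- so your attempt must be judged on its own merits rather than against an internal argument. On those merits, the purely algebraic skeleton is correct and standard: the identification of $\mathcal{G}^0$ with the Zariski closure of $\Gamma\cap\mathcal{G}^0$, the Kolchin argument killing the unipotent radical, and the Schur-plus-determinant argument confining the connected center $T$ to a circle acting through a complex (or quaternionic) structure on $\mathbb{R}^{n+1}$ are all sound.

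The genuine gap sits exactly at the step you yourself single out as ``where the geometric hypothesis does the real work.'' When you take $x$ in the cone $C$ over $\Omega$ and assert that its circle-orbit lies in $\overline{C}$, you are assuming that $T$ preserves $\overline{C}$. But $T$ lives in the Zariski closure, not in $\Gamma$, and preservation of a properly convex cone is a condition that is closed in the Hausdorff topology but \emph{not} Zariski closed, so it does not pass to Zariski closures. For instance, $\{\mathrm{diag}(2^k,2^{-k}):k\in\mathbb{Z}\}$ preserves the open first quadrant of $\mathbb{R}^2$, yet its Zariski closure is $\{\mathrm{diag}(t,t^{-1}):t\neq 0\}$, which contains $\mathrm{diag}(-2,-1/2)$, and that element does not preserve the quadrant. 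Consequently, nothing in your argument shows that $\mathcal{G}^0$, let alone its central torus, preserves \emph{any} properly convex cone. Producing such an invariant cone for the Zariski closure (or a substitute for it, via Benoist's characterization of irreducible cone-preserving groups in terms of positive proximality of their proximal elements) is precisely the hard content of the theorem, and it is absent from the proposal; the rotation/convex-hull contradiction you run afterwards is easy once that invariance is in hand.

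The same omission undercuts your final paragraph on Zariski-connectedness: the ``uniqueness up to scale and duality of the $\mathcal{G}^0$-invariant sharp cone'' presupposes the very cone whose existence is the issue, and the remarks about ``reflection-type'' and ``imprimitive'' extensions are a gesture rather than an argument. Note also that connectedness cannot follow from determinant-one and strong irreducibility alone: the group $\bigl(SL(4,\mathbb{R})\times SL(4,\mathbb{R})\bigr)\rtimes\mathbb{Z}/2$, with the $\mathbb{Z}/2$ swapping the factors, sits inside $SL(16,\mathbb{R})$ acting on $\mathbb{R}^4\otimes\mathbb{R}^4$, and any Zariski-dense subgroup of it meeting both components is strongly irreducible with Zariski-disconnected closure. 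So ruling out extra components must again invoke the invariant convex set --- that is, the step that is missing.
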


Recall that for $v\in (2,\infty)$, $\rho_v:\Gamma_3\rightarrow SL(4,\mathbb{R})$ is a discrete faithful representation that is the holonomy of a strictly convex projective structure on the orbifold. This means Theorem \ref{Benoist} and Proposition \ref{ASirreducible} implies that for all $v\in (2,\infty)$, except for possibly finitely many exceptional specializations of $v$, $\rho_v(G)$ has Zariski closure a semi-simple Lie subgroup of $SL(4,\mathbb{R})$. All such subgroups have been classified, one can check \cite{subalgebras} for a reference. So to show a given strongly irreducible $\rho_v(G)$ is Zariski dense in $SL(4,\mathbb{R})$, we will show no other Lie subgroups of $SL(4,\mathbb{R})$ can be a possible Zariski closure.

Since $G$ is a free group of rank 2 and $\rho_v$ is discrete, faithful, and irreducible, most Lie groups can be easily ruled out as possible Zariski closures by general algebraic reasons. The main obstruction to Zariski density in our specific context is the possibility that $\rho_v(G)$ preserves either a symmetric or skew-symmetric form. This is handled by the following series of Lemmas and Proposition. 

\begin{lemma}\label{split}
	Let $G$ be a non-abelian free subgroup of $\Gamma_3$. Then there is no non-zero real symmetric form preserved by $\rho_{i\sqrt{2}}(G)$.
\end{lemma}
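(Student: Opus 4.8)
The plan is to work at the special parameter $v=i\sqrt{2}$, where $v^{2}+2=0$ and $\sqrt{(v^{2}-4)/(v^{2}+8)}=\sqrt{-1}=i$. First I would record the two resulting generator matrices. The identity $v^{2}+2=0$ kills the $(4,3)$-entry of $\rho_{i\sqrt2}(u)$, so $U:=\rho_{i\sqrt2}(u)$ fixes $e_{1}$ and $e_{2}$ and acts on $\mathrm{span}_{\mathbb{C}}(e_{3},e_{4})$ by $\left(\begin{smallmatrix} i & 1\\ 0 & -i\end{smallmatrix}\right)$; a short check gives $U^{4}=I$. The matrix $C:=\rho_{i\sqrt2}(c)$ has a ``checkerboard'' shape, hence preserves the splitting $\mathrm{span}_{\mathbb{C}}(e_{1},e_{3})\oplus\mathrm{span}_{\mathbb{C}}(e_{2},e_{4})$, and one computes that on each block it has trace $0$ and determinant $-1$, so $C^{2}=I$. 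I would also record the handful of scalar values I need, for instance the squared diagonal block-entry $a^{2}=(1+i\sqrt3)/4\neq1$ and the cross-product $aa'=-\tfrac12\neq1$.

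Next I would rule out \emph{every} invariant bilinear form for the pair $\{U,C\}$. Writing a candidate $4\times4$ matrix $B$ in $2\times2$ blocks adapted to $\mathrm{span}(e_{1},e_{2})\oplus\mathrm{span}(e_{3},e_{4})$, the equation $U^{T}BU=B$ forces the two off-diagonal blocks to vanish, because $U-I$ is invertible on $\mathrm{span}(e_{3},e_{4})$ (its determinant there is $(i-1)(-i-1)=2$), and it constrains the lower block to a two-parameter family. Feeding this into $C^{T}BC=B$ and using $a^{2}\neq1$, $a'^{2}\neq1$ and $aa'=-\tfrac12$ collapses every entry of $B$ to zero. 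Thus $\rho_{i\sqrt2}(\Gamma_3)=\langle U,C\rangle$ preserves no nonzero bilinear form over $\mathbb{C}$; in particular it preserves no nonzero real symmetric form. Since existence of an invariant bilinear form is equivalent to $\rho_{i\sqrt2}\cong\rho_{i\sqrt2}^{*}$, this also shows $\rho_{i\sqrt2}$ is not self-dual on $\Gamma_3$, so some $\gamma_{0}\in\Gamma_3$ satisfies $\mathrm{tr}\,\rho_{i\sqrt2}(\gamma_{0})\neq\mathrm{tr}\,\rho_{i\sqrt2}(\gamma_{0}^{-1})$.

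The genuinely delicate point—and the step I expect to be the main obstacle—is upgrading this from $\Gamma_3$ to an arbitrary non-abelian free subgroup $G$, since a subgroup can preserve strictly more forms than the ambient group. My plan is to invoke absolute irreducibility: arguing as in Proposition \ref{ASirreducible} (which requires checking that $v=i\sqrt2$ is not an exceptional specialization for $G$), $\rho_{i\sqrt2}(G)$ is absolutely irreducible, so its space of invariant bilinear forms is at most one-dimensional and any nonzero invariant form is nondegenerate. Consequently a preserved real symmetric form would be \emph{the} unique invariant form and would force $\mathrm{tr}\,\rho_{i\sqrt2}(\gamma)=\mathrm{tr}\,\rho_{i\sqrt2}(\gamma^{-1})$ for every $\gamma\in G$; it therefore suffices to exhibit a single $\gamma\in G$ whose image has non--self-reciprocal characteristic polynomial at $v=i\sqrt2$.

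To produce such an element uniformly over all $G$, I would study the trace-asymmetry function $\psi_{\gamma}(v)=\mathrm{tr}\,\rho_{v}(\gamma)-\mathrm{tr}\,\rho_{v}(\gamma^{-1})$, which is algebraic in $v$ and vanishes identically at $v=2$ because $\rho_{2}(\Gamma_3)\subset SO^{+}(3,1)$ preserves the Minkowski form. Given free generators $\gamma_{1},\gamma_{2}$ of $G$, the elements $\gamma_{1}^{k}\gamma_{2}^{-k}$ from Lemma \ref{Light Basis} are $\rho_{2}$-hyperbolic with controlled eigenvectors, and I would try to show that for suitable such words $\psi_{\gamma}$ is not identically zero and, crucially, does not vanish at the specific value $v=i\sqrt2$. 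The hard part is exactly this uniform nonvanishing at a single specialization; I would attempt it with a ping-pong argument built from the explicit finite-order generators $U^{4}=C^{2}=I$, forcing any two-generator free subgroup to contain words whose $\rho_{i\sqrt2}$-images have asymmetric forward and inverse traces, which rules out an invariant real symmetric form and finishes the proof.
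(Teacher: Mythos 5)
Your strategy breaks down at its key step, and the failure is structural rather than technical. The whole point of the specialization $v=i\sqrt{2}$ --- stated explicitly in the paper --- is that $\rho_{i\sqrt{2}}$ is \emph{reducible}: conjugating by an explicit matrix $M$ puts every $\rho_{i\sqrt{2}}(\gamma)$, $\gamma\in\pi_1(\mathrm{Vol3})$, into block lower-triangular form with diagonal blocks $\overline{\rho_\infty(\gamma)}$ and $\rho_\infty(\gamma)$, where $\rho_\infty$ is a lift of the $PSL(2,\mathbb{C})$ holonomy. Hence there is a $2$-plane invariant under all of $\rho_{i\sqrt{2}}(\pi_1(\mathrm{Vol3}))$, and therefore under $\rho_{i\sqrt{2}}(G)$ for \emph{every} subgroup $G$. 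So $v=i\sqrt{2}$ is an exceptional specialization for every $G$, and your plan to ``invoke absolute irreducibility arguing as in Proposition \ref{ASirreducible}'' cannot be carried out. This also invalidates the reduction that follows it: without irreducibility, an invariant symmetric form need not be nondegenerate, and trace symmetry only obstructs \emph{nondegenerate} invariant forms. Degenerate invariant forms genuinely exist here: pulling back the determinant (symplectic) form on the invariant quotient $\mathbb{C}^4/W$ gives a nonzero, degenerate, \emph{antisymmetric} bilinear form preserved by $\rho_{i\sqrt{2}}(G)$ for every $G\le\pi_1(\mathrm{Vol3})$. In particular your claim that no nonzero bilinear form is preserved is false for precisely the subgroups the lemma is about (if your computation for the orbifold generators $U,C$ is correct, it is only because their diagonal blocks have determinant $\pm i$, a character obstruction that dies on the finite-index subgroup $\pi_1(\mathrm{Vol3})$).

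Worse still, the element you propose to hunt for does not exist. For $\gamma\in\pi_1(\mathrm{Vol3})$ the block form gives $\mathrm{tr}\,\rho_{i\sqrt{2}}(\gamma)=\mathrm{tr}\,\overline{\rho_\infty(\gamma)}+\mathrm{tr}\,\rho_\infty(\gamma)=2\,\mathrm{Re}\,\mathrm{tr}\,\rho_\infty(\gamma)$, and since $\mathrm{tr}\,A^{-1}=\mathrm{tr}\,A$ in $SL(2,\mathbb{C})$, we get $\mathrm{tr}\,\rho_{i\sqrt{2}}(\gamma^{-1})=\mathrm{tr}\,\rho_{i\sqrt{2}}(\gamma)$ for \emph{all} $\gamma\in\pi_1(\mathrm{Vol3})$; indeed the characteristic polynomial of $\rho_{i\sqrt{2}}(\gamma)$ is a product of two self-reciprocal quadratics, hence palindromic. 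So your trace-asymmetry function $\psi_\gamma$ vanishes at $v=i\sqrt{2}$ identically on $\pi_1(\mathrm{Vol3})$, and no ping-pong argument can produce the asymmetric trace you need. The paper's proof exploits the reducibility instead of fighting it: writing a candidate symmetric $J$ in blocks adapted to the triangular form, the lower-right block satisfies $\rho_\infty(\gamma)^T J_4\rho_\infty(\gamma)=J_4$; Zariski density of $\rho_\infty(G)$ in $SL(2,\mathbb{C})$ (which follows since $\pi_1(\mathrm{Vol3})$ has no non-elementary Fuchsian subgroups) forces $J_4$ to be $SL(2,\mathbb{C})$-invariant, hence symplectic, hence --- being also symmetric --- zero; the off-diagonal blocks then satisfy a sesquilinear invariance which, under the $PSL(2,\mathbb{C})\cong SO^+(3,1)$ isomorphism, would yield a $\rho_2(G)$-fixed vector in $\mathbb{C}^4$, contradicting Proposition \ref{Irreducible}; and $J_1$ is killed like $J_4$. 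The symmetric/antisymmetric dichotomy, which your ``no bilinear form at all'' claim erases, is exactly what makes the lemma true.
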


\begin{proof}
	Recall that $\Gamma_3$ is the orbifold fundamental group of an orbifold that is four-fold covered by Vol3. This means, by passing to a finite index subgroup if necessary, we can assume without loss of generality that $G<\pi_1(\text{Vol3})$. 
	
	At the specialization $v=i\sqrt{2}$, one checks that the representation is reducible, and that we can conjugate so that 
	\[
	\rho_{i\sqrt{2}}(\gamma)=
	\begin{bmatrix}
	\overline{\rho_\infty(\gamma)} & 0  \\
	\star & \rho_\infty(\gamma)
	\end{bmatrix},
	\]
 where $\rho_{\infty}:\pi_1(\text{Vol3})\rightarrow SL(2,\mathbb{C})$ is a lift of the holonomy representation of $\pi_1(\text{Vol3})$ into $PSL(2,\mathbb{C})$ coming from the hyperbolic structure.
 
 Indeed, letting 
 \[M=\begin{bmatrix}
 0 & 0 & 1 & 0 \\
 1 & 0 & 0 & 0 \\
 0 & 0 & 0 & 1 \\
 0 & 1 & 0 & 0 \\ 
 \end{bmatrix},\]
 
 direct calculation yields 
 
 \[
 	M^{-1}\rho_{i\sqrt{2}}(u)M=\begin{bmatrix}
 	1 & 0 & 0 & 0 \\
 	0 & -i & 0 & 0 \\
 	0 & 0 & 1 & 0 \\
 	0 & 1 & 0 & i \\ 
 	\end{bmatrix} 
 \]
 and 
 \[
 M^{-1}\rho_{i\sqrt{2}}(c)M=\begin{bmatrix}
 \frac{1}{4}(i\sqrt{2}-\sqrt{6}) & \frac{1}{8}(-6-2i\sqrt{3}) & 0 & 0 \\
 -1 & \frac{1}{4}(-i\sqrt{2}+\sqrt{6}) & 0 & 0 \\
 0 & 0 & \frac{1}{4}(i\sqrt{2}+\sqrt{6}) & \frac{1}{8}(6-2i\sqrt{3}) \\
 0 & 0 & 1 & \frac{1}{4}(-i\sqrt{2}-\sqrt{6}) \\ 
 \end{bmatrix}.  
 \]
 Let
	\[
	J=
	\begin{bmatrix}
	J_1 & J_2  \\
	J_3 & J_4 
	\end{bmatrix},
	\]
	be a symmetric form preserved by $\rho_{i\sqrt{2}}(G)$. That is, $\rho_{i\sqrt{2}}(\gamma)^TJ\rho_{i\sqrt{2}}(\gamma)=J$ for all $\gamma\in G$. This means the system of equations
	\begin{equation}\label{sym}
	\rho_{\infty}(\gamma)^TJ_4\rho_{\infty}(\gamma)=J_4
	\end{equation}
	must hold for all $\gamma \in G$.\par 
	The system (\ref{sym}) can only be satisfied if $J_4$ is a real symmetric form preserved by $\rho_\infty(G)$. But $\rho_\infty(G)$ is Zariski dense in $SL(2,\mathbb{C})$ because $\pi_1(\text{Vol3})$ contains no non-elementary Fuchsian subgroups. Since the system of equations (\ref{sym}) imply that the algebraic subgroup 
	\[	\{A\in SL(2,\mathbb{C}): A^TJ_4A=J_4\}	\]
	of $SL(2,\mathbb{C})$ must contain $\rho_\infty(G)$, the Zariski density of $\rho_{\infty}(G)$ in $SL(2,\mathbb{C})$ means that every $A\in SL(2,\mathbb{C})$ must satisfy the system (\ref{sym}). Therefore, $J_4$ must be symplectic. Since $J_4$ must also be symmetric, we conclude that $J_4=0$.\par 
	
	Since $J_4=0$, then the system (\ref{sym}) holds only if 
	\[	\rho_\infty(\gamma)^*J_3\rho_\infty(\gamma)=J_3	\]
	for all $\gamma \in G$. 
	
	\begin{Claim}\label{complexified}
		There is no non-zero $M\in M(2,\mathbb{C})$ such that $\rho_{\infty}^*(\gamma)M\rho_{\infty}(\gamma)=M$ for all $\gamma\in G$.
	\end{Claim}

	\begin{proof}[(proof of Claim \ref{complexified})]
		First, we recall the following notation from Section \ref{sird}. As before, $\mathcal{H}$ denotes the basis, fixed in line (\ref{herm}), of the $2\times 2$ Hermitian matrices as a real vector space. For any $A\in SL(2,\mathbb{C})$, 
		\[\tau(A):\text{span}_\mathbb{R}\mathcal{H}\rightarrow\text{span}_\mathbb{R}\mathcal{H}\] is the $\mathbb{R}$-linear map defined by $\tau(A)(H)=A^*HA$. An isomorphism from $PSL(2,\mathbb{C})$ to $SO^+(3,1)$ is given by sending each $[A]\in PSL(2,\mathbb{C})$ to the matrix representation of $\tau(A)$ with respect to $\mathcal{H}$ for any representative $A$ of $[A]\in PSL(2,\mathbb{C})$. That is, in the notation of the proof of this Proposition, we have that $\rho_2(\gamma)$ is the matrix representation of the $\mathbb{R}$ linear map $\tau(\rho_{\infty}(\gamma))$. Therefore, $\rho_2(\gamma)$ is also the matrix representation of the complexification 
		\[\tau(A)_\mathbb{C}:\text{span}_\mathbb{C}\mathcal{H}\rightarrow\text{span}_\mathbb{C}\mathcal{H}.\]
		Since span$_\mathbb{C}\mathcal{H}=M(2,\mathbb{C})$, the absolute irreducibility of $\rho_2(G)$ (see Proposition \ref{Irreducible}) gives the desired claim, as any $M\in M(2,\mathbb{C})$ satisfying $\rho_{\infty}^*(\gamma)M\rho_{\infty}(\gamma)=M$ for all $\gamma\in G$ would correspond to a vector in $\mathbb{C}^4$ fixed by $\rho_2(G)$.  
	\end{proof}
	Claim \ref{complexified} implies $J_3=0$. Hence $J_2=J_3^T=0$. Then similar reasoning will show $J_1=0$.
\end{proof}

\begin{lemma}\label{nosym}
	Let $G$ be a non-abelian rank 2 free subgroup of $\Gamma_3$. There is no interval $I\subset \mathbb{R}$ containing 2 with non-empty interior such that $\rho_v(G)$ preserves a symmetric form for all $v\in I$.
\end{lemma}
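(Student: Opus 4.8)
The plan is to argue by contradiction using an analyticity/rigidity argument that leverages Lemma \ref{split}. Suppose for contradiction that there is an interval $I\subset\mathbb{R}$ containing $2$ with non-empty interior such that $\rho_v(G)$ preserves a symmetric form for every $v\in I$. The key point is that the condition ``$\rho_v(G)$ preserves a non-zero symmetric form'' is, for fixed $v$, a linear algebra condition: it amounts to the existence of a non-trivial solution $J$ to the finite system $\rho_v(\gamma)^TJ\rho_v(\gamma)=J$ as $\gamma$ ranges over the generators (and hence all of $G$, since these equations are multiplicative). Writing this as a single homogeneous linear system $\mathcal{L}(v)\cdot \mathrm{vec}(J)=0$ whose coefficient matrix $\mathcal{L}(v)$ has entries that are polynomials in the matrix entries of $\rho_v(\gamma_1),\rho_v(\gamma_2)$, the existence of a non-zero symmetric invariant form for all $v\in I$ means $\mathcal{L}(v)$ fails to have full rank for all $v$ in an interval.

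First I would record that the matrix entries of $\rho_v(G)$ lie in $\mathbb{Q}(v,\sqrt{v^2-4})$, exactly as in the proof of Proposition \ref{ASirreducible}(1), so that each maximal minor of $\mathcal{L}(v)$ is an algebraic function of the single complex parameter $v$. The rank of $\mathcal{L}(v)$ drops below maximal precisely on the common zero set of these minors, and an algebraic function vanishing on an interval with non-empty interior must vanish identically on the connected component of its domain containing $I$. Thus, if the maximal minors all vanish on $I$, they vanish identically, and so $\rho_v(G)$ would preserve a non-zero symmetric form for \emph{every} admissible specialization of $v$ in that component --- in particular at the specialization $v=i\sqrt{2}$, which lies in the same component of the Riemann surface of $\sqrt{v^2-4}$ after analytic continuation.

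The final step is to derive a contradiction with Lemma \ref{split}, which asserts precisely that $\rho_{i\sqrt{2}}(G)$ preserves no non-zero real symmetric form. The one subtlety I would flag as the main obstacle is bookkeeping about the field of definition and reality: the symmetric form produced by analytic continuation to $v=i\sqrt{2}$ a priori has complex entries, whereas Lemma \ref{split} rules out \emph{real} symmetric forms. I would handle this by noting that at $v=i\sqrt{2}$ the existence of \emph{any} non-zero symmetric form (real or complex) still contradicts the argument in Lemma \ref{split}, since that proof in fact shows the block system forces $J_1=J_2=J_3=J_4=0$ working over $\mathbb{C}$ throughout --- the absolute irreducibility of $\rho_2(G)$ from Proposition \ref{Irreducible} and the Zariski density of $\rho_\infty(G)$ in $SL(2,\mathbb{C})$ are statements over $\mathbb{C}$. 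Hence the vanishing of all the minors is impossible, $\mathcal{L}(v)$ has full rank for generic $v$, and in particular there can be no interval $I$ as hypothesized. This completes the proof.
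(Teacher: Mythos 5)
Your proof is correct, and it rests on the same two pillars as the paper's own argument --- algebraicity of the matrix entries in $v$, and the specialization $v=i\sqrt{2}$ handled by Lemma \ref{split} --- but the continuation mechanism is genuinely different. The paper first uses uniqueness of the invariant form up to scale (coming from absolute irreducibility) to regard the entries of $J_v$ themselves as algebraic functions of the shape $p(v)+q(v)\sqrt{v^2-4}+r(v)\sqrt{v^2+8}+s(v)\sqrt{v^2-4}\sqrt{v^2+8}$, scaled so that the polynomials $p,q,r,s$ over all entries have no common factor; Lemma \ref{split} then forces every entry to vanish at $v=i\sqrt{2}$, and a Galois-automorphism argument ($\sqrt{6}\mapsto-\sqrt{6}$, $\sqrt{-6}\mapsto-\sqrt{-6}$, $6i\mapsto-6i$) shows $(v^2+2)$ divides each of $p,q,r,s$, contradicting that normalization. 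You instead continue the solvability condition itself: the maximal minors of $\mathcal{L}(v)$ are algebraic functions vanishing on $I$, hence identically zero as elements of the function field, hence zero at $v=i\sqrt{2}$. This avoids the uniqueness and normalization step and the Galois bookkeeping entirely (a minor quibble: the relevant coefficient field is $\mathbb{Q}(v,\sqrt{v^2-4},\sqrt{v^2+8})$ rather than $\mathbb{Q}(v,\sqrt{v^2-4})$, but your argument only needs algebraicity). You also correctly isolate and resolve the one genuine subtlety, namely that the form produced at $v=i\sqrt{2}$ is a priori complex while Lemma \ref{split} is stated for real forms: as you say, the proof of Lemma \ref{split} (Zariski density of $\rho_\infty(G)$ in $SL(2,\mathbb{C})$ forcing $J_4$ to be symplectic, and Claim \ref{complexified}, which is a statement about $M(2,\mathbb{C})$) goes through verbatim for complex symmetric forms. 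Note that the paper's proof implicitly relies on this same complex version, since the specialized entries of $J_v$ at $v=i\sqrt{2}$ are complex numbers; your write-up makes this explicit. What the paper's route buys in exchange is explicit polynomial information --- the divisibility of all coefficients by $v^2+2$ --- obtained by elementary algebra, without invoking the identity theorem on the Riemann surface of the function field.
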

\begin{proof}
	Suppose for contradiction there exists such an interval $I$. For all $v\in I$, let $J_v$ denote the non-zero symmetric form preserved by $\rho_v(G)$. Up to scaling, $J_v$ is the unique form satisfying the system of equations 
	\begin{equation}\label{preserve}
	\rho_v(\gamma)^TJ_v\rho_v(\gamma)=J_v
	\end{equation}
	for all $v\in I$ and $\gamma\in G$. Therefore, we can regard the entries of $J_v$ as functions with the following shape: 
	\begin{equation}\label{entries}
	p(v)+q(v)\sqrt{v^2-4}+r(v)\sqrt{v^2+8}+s(v)\sqrt{v^2-4}\sqrt{v^2+8},
	\end{equation} 
	where we have scaled $J_v$ so that for all the entries, the functions $p,q,r,s$ as in equation \ref{entries} are in $\mathbb{Q}[v]$ with no common factors.\par 
	
	Lemma \ref{split} implies that equation \ref{entries} vanishes at $v=i\sqrt{2}$. That is, 
	\begin{equation}\label{vanish}
	p(i\sqrt{2}) + q(i\sqrt{2})\sqrt{-6} + r(i\sqrt{2})\sqrt{6} + 6is(i\sqrt{2})=0.
	\end{equation}
	Applying the Galois automorphism $\sqrt{6} \mapsto -\sqrt{6}$ to equation \ref{vanish} yields 
	\begin{equation}\label{conj1}
	p(i\sqrt{2}) + q(i\sqrt{2})\sqrt{-6} - r(i\sqrt{2})\sqrt{6} + 6is(i\sqrt{2})=0.
	\end{equation}
	Combining equations \ref{vanish} and \ref{conj1} gives
	\begin{equation}\label{cancelled}
	p(i\sqrt{2}) + q(i\sqrt{2})\sqrt{-6} + 6is(i\sqrt{2})=0.
	\end{equation}
	That is, $(v^2+2)$ divides $r(v)$. Similarly, applying the Galois automorphism $\sqrt{-6}\mapsto -\sqrt{-6}$ to equation \ref{cancelled} will imply $(v^2+2)$ divides $q(v)$. Then applying the Galois automorphism $6i \mapsto -6i$ will show $(v^2+2)$ divides $s(v)$. Since this implies $q(i\sqrt{2})=r(i\sqrt{2})=s(i\sqrt{2})=0$, then equation \ref{vanish} implies $(v^2-2)$ divides $p(v)$. \par 
	
	This means in all the entries of $J_v$, the polynomials $p,q,r,$ and $s$ all have the common factor $(v^2+2)$. This establishes the desired contradiction.
\end{proof}

\begin{prop}\label{nopal}
	Let $G$ be a non-abelian rank 2 free subgroup of $\Gamma_3$. There exists $\gamma\in G$ such that the characteristic polynomial $\chi_{\rho_v(\gamma)}$ is not palindromic for all but finitely many $v\in \mathbb{C}$. 
\end{prop}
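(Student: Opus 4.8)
The plan is to translate non-palindromicity into a statement about traces and then argue by contradiction, using the irreducibility and no-symmetric-form results already established. Since $\det \rho_v(\gamma) = 1$, the characteristic polynomial $\chi_{\rho_v(\gamma)}(x) = x^4 - c_1 x^3 + c_2 x^2 - c_3 x + 1$ has $c_1 = \mathrm{tr}(\rho_v(\gamma))$ and $c_3 = \mathrm{tr}(\rho_v(\gamma)^{-1}) = \mathrm{tr}(\rho_v(\gamma^{-1}))$, so $\chi_{\rho_v(\gamma)}$ is palindromic if and only if $\mathrm{tr}(\rho_v(\gamma)) = \mathrm{tr}(\rho_v(\gamma^{-1}))$. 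For each fixed $\gamma$ the difference $f_\gamma(v) := \mathrm{tr}(\rho_v(\gamma)) - \mathrm{tr}(\rho_v(\gamma^{-1}))$ is an algebraic function of $v$ with entries in $\mathbb{Q}(v,\sqrt{v^2-4},\sqrt{v^2+8})$, so it either vanishes identically or has only finitely many zeros. It therefore suffices to produce a single $\gamma \in G$ with $f_\gamma \not\equiv 0$; equivalently, I will show that the possibility $f_\gamma \equiv 0$ for every $\gamma \in G$ leads to a contradiction.

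So suppose $\mathrm{tr}(\rho_v(\gamma)) = \mathrm{tr}(\rho_v(\gamma^{-1}))$ for all $\gamma \in G$ and all $v$. Let $U \subset \mathbb{C}$ be the set on which $\rho_v(G)$ is absolutely irreducible, which is the complement of a finite set by Proposition \ref{ASirreducible}(1), and note $2 \in U$ by Proposition \ref{Irreducible}. Fix $v \in U$. Since the characters of $\rho_v|_G$ and its dual $(\rho_v|_G)^*$ agree and $\rho_v|_G$ is absolutely irreducible, hence semisimple, the two representations are isomorphic, so $\rho_v(G)$ preserves a nondegenerate bilinear form $B_v$. By Schur's lemma the space of invariant forms is one dimensional; since the transpose of an invariant form is again invariant, this forces $B_v$ to be purely symmetric or purely skew-symmetric. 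Thus each $v \in U$ carries a well-defined type $\epsilon(v) \in \{+1,-1\}$ with $B_v^T = \epsilon(v) B_v$.

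I will argue that $\epsilon$ is constant on $U$ and equal to $+1$, which contradicts Lemma \ref{nosym}. The set $U$ is connected, being the complement of finitely many points of $\mathbb{C}$. Solving the linear invariance equations $\rho_v(\gamma_j)^T B \rho_v(\gamma_j) = B$ for $j=1,2$ exhibits $B_v$, up to scale, as a nonvanishing section of the kernel of a matrix that depends algebraically on $v$ and has constant rank $15$ on $U$; hence $B_v$ may be chosen to vary continuously and nonvanishingly near each point of $U$, and the relation $B_v^T = \epsilon(v)B_v$ shows $\epsilon$ is locally constant, so constant on $U$. At $2 \in U$ the group $\rho_2(G) \subset SO^+(3,1)$ preserves the symmetric Minkowski form, so $\epsilon(2) = +1$ and therefore $\epsilon \equiv +1$. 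Choosing a real interval $I$ containing $2$ and disjoint from the finitely many points outside $U$, we conclude that $\rho_v(G)$ preserves a nonzero symmetric form for every $v \in I$, contradicting Lemma \ref{nosym}. Hence some $f_\gamma \not\equiv 0$, which is the assertion of the proposition.

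The main obstacle is the step asserting that the symmetric-versus-skew type does not jump as $v$ varies; making it rigorous amounts to controlling $B_v$ as an algebraic, nonvanishing function of $v$ across $U$, i.e. the local constancy of the Frobenius--Schur indicator. A concrete alternative closer in spirit to the proof of Lemma \ref{nosym} is to normalize the entries of $B_v$ to be coprime functions of the shape appearing there and then take the limit $v \to 2$: a nonzero invariant form survives in the limit, it is skew if $\epsilon \equiv -1$ near $2$, and this contradicts the fact that the absolutely irreducible group $\rho_2(G)$ already preserves a symmetric form, whose uniqueness up to scale excludes any invariant skew form.
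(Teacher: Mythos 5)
Your proof is correct, but it takes a genuinely different route through the middle of the argument than the paper does. Both proofs share the same outer architecture: negate the conclusion, use the fact that the obstruction is an algebraic function of $v$ (so palindromicity for infinitely many $v$ forces identical vanishing), and funnel the resulting invariant symmetric form into a contradiction with Lemma \ref{nosym}. The difference is how that symmetric form is produced. The paper quotes the explicit shape of $\chi_{\rho_v(\gamma)}$ computed in \cite{Flexing}, observes that palindromicity for all $\gamma$ forces $\mathrm{tr}\,\rho_v(\gamma)\in\mathbb{R}$ for $v\in(-2,2)$, and then invokes complex hyperbolic geometry: for $v\in(2-\epsilon,2)$ the group $\rho_v(G)$ is a non-elementary complex hyperbolic Kleinian group (Lemma \ref{comp}), so the Kim--Kim theorem (Theorem \ref{Kim}), with absolute irreducibility ruling out the $SU(1,1)\times SU(2)$ alternative, conjugates $\rho_v(G)$ into $SO(3,1)$, yielding the forbidden symmetric form. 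You instead stay entirely inside representation theory: palindromicity is $\mathrm{tr}\,\rho_v(\gamma)=\mathrm{tr}\,\rho_v(\gamma^{-1})$, so the contradiction hypothesis makes $\rho_v|_G$ self-dual wherever it is absolutely irreducible; Schur's lemma and the Frobenius--Schur dichotomy give a one-dimensional space of invariant forms of pure type $\epsilon(v)=\pm 1$, and a constancy-of-type argument anchored at $\epsilon(2)=+1$ produces a symmetric form on a real interval about $2$. Your version is more elementary and more robust: it needs neither the characteristic-polynomial computation from \cite{Flexing}, nor discreteness and faithfulness of $\rho_v$ near $v=2$, nor Theorem \ref{Kim}. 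What the paper's route buys is that the form appears immediately from a single citation once real traces are established, with no need to discuss how invariant forms vary in a family.

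Two points of rigor on your side. The constant-rank/continuity argument for local constancy of $\epsilon$ is sound, and note that you only need it on a real interval about $v=2$ (where $\rho_v$ is continuous in $v$ across the branch point), not on all of $U$; constancy on all of $U$ would require passing to the curve $y^2=v^2-4$, $z^2=v^2+8$ to handle the multivaluedness of the matrix entries, though nothing in your contradiction uses that. Your ``concrete alternative,'' however, has a genuine subtlety as stated: coprime normalization of the entries of $B_v$ does not by itself guarantee a nonzero limit at $v=2$, because $v=2$ is a branch point --- the terms carrying $\sqrt{v^2-4}$ vanish there regardless of coprimality, so every entry can vanish at $v=2$ even when the polynomials $p,q,r,s$ have no common polynomial factor (e.g. the entry $(v-2)+\sqrt{v^2-4}$). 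To repair it one must factor out powers of the local uniformizer $\sqrt{v-2}$ on the curve rather than polynomial factors. Since your primary argument does not rely on this alternative, the proof stands as written.
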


\begin{proof}
	In \cite{Flexing}, it is shown that the characteristic polynomial $\chi_{\rho_v(\gamma)}(Q)$ of $\rho_v(\gamma)$ has the following shape:
	\[	1 - (p_\gamma(v)- q_\gamma(v)\sqrt{v^2-4})Q + r_\gamma(v)Q^2 -(p_\gamma(v)+ q_\gamma(v)\sqrt{v^2-4})Q^3+Q^4.	\]
	Here, $p_\gamma,q_\gamma,r_\gamma\in \mathbb{Q}[v]$. This means $\chi_{\rho_v(\gamma)}$ is palindromic if and only if either $v\in \{\pm 2\}$ or $q_\gamma(v)\equiv0$.\par 
	
	Assume for contradiction $\chi_{\rho_v(\gamma)}$ is palindromic for infinitely many $v\in \mathbb{C}$ and all $\gamma\in G$. This means $q_\gamma\equiv 0$, and hence $tr\rho_v(\gamma)=-p_\gamma(v)$ for all $\gamma \in G$. In particular, we have $tr\rho_v(\gamma)\in \mathbb{R}$ for all $\gamma\in G$ and $v\in (-2,2)$.\par 
	
	Recall that the image of $\rho_v$ lies in $SU(3,1)$ for all $v\in (-2,2)$. By Proposition \ref{comp}, there exists $\epsilon>0$ such that $\rho_v(G)$ is a non-elementary complex hyperbolic Kleinian group for all $v\in (2-\epsilon,2)$. By Lemma \ref{ASirreducible}, we can take $\epsilon$ so that $\rho_v(G)$ is also absolutely irreducible. Since $\rho_v(G)$ is an irreducible complex hyperbolic Kleinian group such that $tr\rho_v(\gamma)\in \mathbb{R}$ for all $\gamma\in G$, Theorem \ref{Kim} implies $\rho_v(G)$ can be conjugated into $SO(3,1)$ for all $v\in (2-\epsilon,2)$. This contradicts Lemma \ref{nosym}.	
\end{proof}
We now put together the previous results to prove Theorems \ref{main} and \ref{main2}.
\begin{proof}[\textbf{Proof of Theorem \ref{main}}]
	By Proposition \ref{ASirreducible}, we can obtain a set $\mathcal{S}\subset [2,\infty)$ such that $[2,\infty)\setminus \mathcal{S}$ is at most countable and $\rho_v(G)$ is strongly irreducible for all $v\in \mathcal{S}$ and all non-abelian rank 2 free subgroups $G<\Gamma_3$. Fix $G$ a non-abelian rank 2 free subgroup of $\Gamma_3$. We first show that for all but a possibly finite subset of $v\in \mathcal{S}$, $\rho_v(G)$ is Zariski dense in $SL(4,\mathbb{R})$.
	
	Let $\overline{\rho_v(G)}$ denote the Zariski closure of $\rho_v(G)$ in $SL(4,\mathbb{R})$ and $\overline{\rho_v(G)}^0$ denote its identity component. By the strong irreducibility of $\rho_v(G)$ for $v\in \mathcal{S}$, Theorem \ref{Benoist} implies $\overline{\rho_v(G)}$ is a semi-simple Lie subgroup of $SL(4,\mathbb{R})$. But $\rho_v$ is also discrete and faithful, meaning it discretely embeds a free group of rank 2 into $\overline{\rho_v(G)}$. Hence, $\overline{\rho_v(G)}$ cannot be compact or solvable. By the classification of Lie sub-algebras of $sl(4,\mathbb{R})$ (see \cite{subalgebras}), if we can show $\overline{\rho_v(G)}^0$ does not preserve any form, then we can conclude $\rho_v(G)$ is Zariski dense in $SL(4,\mathbb{R})$. 
	
	By Proposition \ref{nopal}, there exists $\rho_v(\gamma)\in \rho_v(G)$ such that its characteristic polynomial $\chi_{\rho_v(\gamma)}$ is not palindromic for all but finitely many specializations of $v$. 
	
	\begin{Claim}\label{powers}
		If $v\in \mathcal{S}$ is such that $\chi_{\rho_v(\gamma)}$ is not palindromic, then $\rho_v(G)$ is Zariski dense in $SL(4,\mathbb{R})$.
	\end{Claim}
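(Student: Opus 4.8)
The plan is to argue by contradiction using the structural results already assembled. For $v\in\mathcal{S}$ with $\chi_{\rho_v(\gamma)}$ not palindromic, Theorem \ref{Benoist} guarantees that $\overline{\rho_v(G)}$ is a Zariski-connected semisimple Lie subgroup of $SL(4,\mathbb{R})$, and the discussion preceding the Claim reduces Zariski density to showing that $\overline{\rho_v(G)}^0$ preserves no bilinear form. So I would suppose for contradiction that $\overline{\rho_v(G)}^0$ preserves a nonzero form $J$, meaning $A^TJA=J$ for all $A\in\overline{\rho_v(G)}^0$, and then derive that $\chi_{\rho_v(\gamma)}$ must be palindromic, contrary to hypothesis.

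First I would record that such a $J$ is automatically non-degenerate. Its radical $\{w : \langle w,\cdot\rangle_J \equiv 0\}$ is an invariant subspace, so by the (absolute) irreducibility furnished by $v\in\mathcal{S}$ it is either $0$ or all of $\mathbb{C}^4$; since $J\neq 0$, it must be $0$. The key algebraic step is then that invariance of a non-degenerate form forces reciprocal eigenvalue pairs. From $A^TJA=J$ one obtains $A^T=JA^{-1}J^{-1}$, so $A^T$ is conjugate to $A^{-1}$; combined with $\chi_{A^T}=\chi_A$ this yields $\chi_A=\chi_{A^{-1}}$. Since $A\in SL(4,\mathbb{R})$ has $\det A=1$, the roots of $\chi_{A^{-1}}$ are exactly the reciprocals of those of $\chi_A$, so the equality $\chi_A=\chi_{A^{-1}}$ says precisely that the eigenvalues of $A$ occur in pairs $\{\lambda,\lambda^{-1}\}$, which for a degree four characteristic polynomial with leading and constant coefficient both equal to $1$ is equivalent to $\chi_A$ being palindromic.

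Finally I would invoke Zariski-connectedness to transfer this from the group to the distinguished element $\rho_v(\gamma)$. Because $\overline{\rho_v(G)}$ is Zariski-connected, $\overline{\rho_v(G)}=\overline{\rho_v(G)}^0$, so every element of $\rho_v(G)$ preserves $J$; in particular $\rho_v(\gamma)$ does, whence $\chi_{\rho_v(\gamma)}$ is palindromic, the desired contradiction. It follows that $\overline{\rho_v(G)}^0$ preserves no form, and by the classification of semisimple subalgebras of $sl(4,\mathbb{R})$ in \cite{subalgebras}, $\rho_v(G)$ is Zariski dense in $SL(4,\mathbb{R})$. Beyond the routine eigenvalue bookkeeping, the step requiring the most care is the passage from a single non-palindromic element to the entire Zariski closure, which is exactly what Zariski-connectedness supplies; establishing non-degeneracy of $J$ from irreducibility is the other point to verify cleanly.
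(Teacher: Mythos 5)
Your non-degeneracy observation and the equivalence ``preserves a non-degenerate form $\Rightarrow$ eigenvalues come in reciprocal pairs $\Rightarrow$ palindromic (given $\det=1$)'' are both correct, but the step you yourself flag as the crux --- ``Because $\overline{\rho_v(G)}$ is Zariski-connected, $\overline{\rho_v(G)}=\overline{\rho_v(G)}^0$'' --- is a genuine gap, because it conflates two different topologies. The superscript $0$ in the paper's reduction denotes the identity component in the \emph{classical} (Lie group) topology: the classification of Lie subalgebras of $sl(4,\mathbb{R})$ only controls the Lie algebra of the closure, hence only tells you that the classical identity component preserves some non-degenerate form $J$. Zariski-connectedness does not imply classical connectedness: $SO(3,1)$ is Zariski-connected, yet its classical identity component $SO^+(3,1)$ has index two, and this is not a contrived example --- it is exactly the kind of group that arises in this paper (at $v=2$ the Zariski closure of $\rho_2(G)$ is $SO(3,1)$). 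So from Theorem \ref{Benoist} you may not conclude that $\rho_v(\gamma)$ lies in the subgroup known to preserve $J$. A priori $\rho_v(\gamma)$ sits in a non-identity classical component; such an element need only satisfy $\rho_v(\gamma)^TJ\rho_v(\gamma)=\pm J$ (it normalizes the component that preserves $J$), and the sign $-1$ case forces the eigenvalues to be closed under $\lambda\mapsto -\lambda^{-1}$ rather than $\lambda\mapsto\lambda^{-1}$, which is perfectly compatible with a non-palindromic characteristic polynomial (e.g.\ eigenvalues $5,3,-1/3,-1/5$). Hence no contradiction follows from your hypotheses as written.

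This finite-index issue is precisely what the paper's proof is built around, and is why its argument looks different from yours: since the classical component group of a real algebraic group is finite, \emph{some power} $\rho_v(\gamma)^n$ lies in $\overline{\rho_v(G)}^0$, and the paper uses biproximality ($\lambda_1,\lambda_4\in\mathbb{R}$, $|\lambda_1|>|\lambda_2|\geq|\lambda_3|>|\lambda_4|$, $\det=1$) to show that \emph{every} power of $\rho_v(\gamma)$ still has non-palindromic characteristic polynomial, so no power can preserve a form. Your route can be repaired, but it requires an argument your proposal omits: the classical identity component is normal in $\overline{\rho_v(G)}$; by absolute irreducibility (Schur) its invariant form $J$ is unique up to scale; therefore $g^TJg=c(g)J$ defines an algebraic character $c:\overline{\rho_v(G)}\rightarrow\{\pm 1\}$; and Zariski-connectedness forces $c\equiv 1$, so that $\rho_v(\gamma)$ itself preserves $J$. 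With that inserted, your powers-free argument goes through and is arguably cleaner than the paper's, but as written the proposal replaces this necessary step with an identification of components that is false in general.
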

	
	\begin{proof}[(proof of Claim \ref{powers})]
		Recall that $\rho_v(\gamma)$ is biproximal, meaning it has a unique real eigenvalue of largest modulus, and a unique real eigenvalue of smallest modulus. Let 
		\[	|\lambda_1|>|\lambda_2|\geq |\lambda_3|>|\lambda_4|	\]
		denote the eigenvalues of $\rho_v(\gamma)$.
		Also recall that $\chi_{\rho_v(\gamma)}$ being non-palindromic is equivalent to the eigenvalues of $\rho_v(\gamma)$ not being multiplicatively symmetric about 1. These facts, along with $det\rho_v(\gamma)=1$ and $\lambda_1,\lambda_4\in \mathbb{R}$, implies that $|\lambda_2|^n|\lambda_3|^n\neq 1$ for all $n\in \mathbb{N}$. That is, $\rho_v^n(\gamma)$ has non-palindromic characteristic polynomial for all $n$ because the eigenvalues of $\rho_v^n(\gamma)$ can never be multiplicatively symmetric about 1. In particular, no power of $\rho_v(\gamma)$ can preserve any form. Since $\overline{\rho_v(G)}^0$ is a finite index subgroup of $\overline{\rho_v(G)}$, this implies $\rho_v(G)$ is Zariski dense in $SL(4,\mathbb{R})$.		
	\end{proof}
	
	Claim \ref{powers} implies that every non-abelian rank 2 free subgroup of $\Gamma_3$ has image under $\rho_v$ a Zariski dense subgroup of $SL(4,\mathbb{R})$, for all but possibly countably many $v\in \mathcal{S}$. A Ping-Pong argument shows that every pair of non-commuting elements in $\Gamma_3$ generates a subgroup that contains a non-abelian rank 2 free subgroup. Hence, we can conclude that $\rho_v:\Gamma_3 \rightarrow SL(4,\mathbb{R})$ is strongly dense for all but possibly countably many $v\in \mathcal{S}$.	
\end{proof}

\begin{proof}[\textbf{Proof of Theorem \ref{main2}}]
		By theorem 2.3 of \cite{Flexing}, we can take $\epsilon>0$ such that for all $v\in (2-\epsilon,2)$, $\rho_v$ is discrete and faithful. Proposition \ref{ASirreducible} implies there exists $t\in (2-\epsilon,2)$ such that the image of every non-abelian rank 2 free subgroup of $\Gamma_3$ under $\rho_t$ is absolutely irreducible. By Proposition \ref{nopal}, we can choose this $t\in (2-\epsilon,2)$ so that every non-abelian rank 2 free subgroup of $\Gamma_3$ has an element whose image under $\rho_t$ has characteristic polynomial that is not palindromic.
		
		A Ping-Pong argument shows that every pair of non-commuting elements in $\Gamma_3$ generates a subgroup that contains a non-abelian rank 2 free group. So we will get our desired result if we can show that every non-abelian rank 2 free subgroup of $\Gamma_3$ has image under $\rho_t$ a Zariski dense subgroup of $SU(3,1)$. 
		
		Let $G<\Gamma_3$ be a non-abelian rank 2 free group. Let $\overline{\rho_t(G)}$ denote the Zariski closure of $\rho_t(G)$ in $SU(3,1)$. Then $\overline{\rho_t(G)}$ is a closed subgroup of $SU(3,1)$ with its classical topology. Theorem 4.4.2 of \cite{closedcplx} implies that either $\overline{\rho_t(G)}=SU(3,1)$, or $\overline{\rho_t(G)}$ preserves a fixed point in $\partial \mathbb{CH}^3$, or $\overline{\rho_t(G)}$ preserves a totally geodesic submanifold of $\mathbb{CH}^3$. Since $\rho_t(G)$ is absolutely irreducible by our construction of $t$, $\overline{\rho_t(G)}$ cannot have a global fixed point. If $\overline{\rho_t(G)}$ preserves a totally geodesic submanifold of $\mathbb{CH}^3$, then Proposition 2.5.1 of \cite{closedcplx} on the classification of totally geodesic subspaces implies that $\overline{\rho_t(G)}$ is contained in a copy of $SO(3,1)$. But $\rho_t(G)$ contains an element with non-palindromic characteristic polynomial, and hence cannot preserve a symmetric form. We conclude $\overline{\rho_t(G)}=SU(3,1)$, and our desired result follows.	
\end{proof}

\end{document}